\theoremstyle{plain}
\newtheorem{theorem}{Theorem}
\newtheorem{lemma}[theorem]{Lemma}
\newtheorem{method}{Method}
\newtheorem{assumption}{Assumption}
\theoremstyle{definition}
\newtheorem{definition}{Definition}
\theoremstyle{remark}
\newtheorem{remark}{Remark}
\renewcommand{\vec}[1]{\mathbf{#1}}
\begin{document}
\parskip.9ex

\title[Infinite Time Solutions of Numerical Schemes for Advection Problems]
{Infinite Time Solutions of Numerical Schemes for Advection Problems}

\author[A. Biswas]{Abhijit Biswas}
\address[Abhijit Biswas]
{Department of Mathematics \\ Temple University \\
1805 North Broad Street \\ Philadelphia, PA 19122}
\email{tug14809@temple.edu}
\urladdr{https://math.temple.edu/\~{}tug14809/}

\author[B. Seibold]{Benjamin Seibold}
\address[Benjamin Seibold]
{Department of Mathematics \\ Temple University \\
1805 North Broad Street \\ Philadelphia, PA 19122}
\email{seibold@temple.edu}
\urladdr{http://www.math.temple.edu/\~{}seibold}

\subjclass[2010]{65M06; 65M08; 65M12; 65M25}

\keywords{infinite time limit, exact method, jet scheme, nonlinear interpolant, advection}

\begin{abstract}
This paper addresses the question whether there are numerical schemes for constant-coefficient advection problems that can yield convergent solutions for an infinite time horizon. The motivation is that such methods may serve as building blocks for long-time accurate solutions in more complex advection-dominated problems. After establishing a new notion of convergence in an infinite time limit of numerical methods, we first show that linear methods cannot meet this convergence criterion. Then we present a new numerical methodology, based on a nonlinear jet scheme framework. We show that these methods do satisfy the new convergence criterion, thus establishing that numerical methods exist that converge on an infinite time horizon, and demonstrate the long-time accuracy gains incurred by this property.
\end{abstract}

\maketitle

\section{Introduction}
\label{sec:introduction}
The accurate numerical resolution of transport is a crucial ingredient in many computational mathematics/physics/engineering problems. This work focuses on the linear transport under a velocity field that is either given, or arises from other components in a multiphysics computation, such as: (i)~the evolution of structures and/or interfaces defined by a level set function $\phi(\vec{x},t)$ \cite{OsherFedkiw2002, NaveRosalesSeibold2010}; (ii)~the passive convection of density fields or marker distributions under a flow field \cite{VanLeer1977_2}; or (iii)~the free-streaming step in particle transport computations \cite{Brunner2002, Chandrasekhar1960} solved via fractional steps. Moreover, while some applications admit Lagrangian approximations such as particle methods \cite{Monaghan2005, FarjounSeibold2009}, this work focuses on the accurate capture of advection via Eulerian methods, i.e., fixed-grid approaches, possibly with adaptive mesh refinement (AMR) \cite{BergerOliger1984}.

Modern (and future) compute infrastructure, combined with adaptive methodologies (such as AMR), enable the numerical resolution and evolution of small structures (such as droplets in a multi-phase fluid flow simulation, represented by a level set function) over millions of time steps. And with most existing Eulerian methods, even if the structure (level set function) itself remains unchanged (the droplet is merely moved by a locally constant velocity field, i.e., simple transport), the numerical scheme produces a spurious slow change of shape of the solution over time. While in some applications (but not all), overlapping grids \cite{Henshaw1994} and related techniques can mitigate the problem, they do not fully remove the problem of slow deformation over time. Hence, a question of fundamental interest is: \emph{are there numerical methods that do not suffer from a slow deterioration of the numerical solution in time, but instead possess convergent infinite time limits?} --- with the rationale that such methods will be very accurate for long-time computations of transport.

To investigate this fundamental question, we start out at a conceptual level, by considering the constant-coefficient linear advection equation in one space dimension
\begin{equation}
\label{eq:advection_1d}
u_t+a u_x = 0
\end{equation}
on the periodic domain $x\in [0,1]$, with initial conditions $u(x,0) = u_0(x)$. This problem is to be understood as a model problem for the advection of a field quantity (or level set function), $u_t+\vec{a}\cdot\nabla u = 0$, under a locally constant ($\vec{a} = \text{const.}$) or slowly varying ($\vec{a}(\vec{x},t)$) velocity field; and the periodic boundary conditions stand as a proxy for a long evolution on a large or unbounded domain.

Despite the simplicity of \cref{eq:advection_1d}, developing numerical methods that accurately evolve the solution on a fixed grid is a non-trivial task. Of course, for \cref{eq:advection_1d} itself, there are methods that are truly exact, namely most standard linear schemes when the CFL number is chosen exactly 1, so that the solution is moved exactly from one grid point to the next in one time step. However, because such a choice is not an option is most practical problems (e.g., due to varying or unknown velocity fields), we explicitly exclude this trivial case and restrict to CFL numbers truly less than 1.

The weaker objective to design numerical schemes that exhibit \emph{little} deformation of the numerical solution over long times has been pursued, for instance via Fourier continuation methods \cite{albin2012fourier, bruno2010high}. Due to low dispersive errors, these methods achieve high accuracy for advection-dominated problems compared to traditional finite difference/volume methods. Yet, the methods are unable to generate convergent infinite time limits, for the simple reason that they are linear (see \cref{sec:linear_methods}).

To construct numerical methods that \emph{do} possess convergent infinite time limits, we employ the jet scheme methodology \cite{SeiboldRosalesNave2012}, equipped with a special nonlinear interpolant. Jet schemes are semi-Lagrangian fixed-grid approaches that achieve high order by tracking function values and derivatives along characteristic curves, defined via optimally local Hermite interpolation at the foot points of the characteristic curves.

This paper is organized as follows. In \cref{sec:convergence_and_the_problem}, we define the new notion of convergence and formulate the mathematical problem. In \cref{sec:linear_methods}, we show that linear methods cannot achieve convergent infinite time limits. Then in \cref{sec:jet scheme} we present the new nonlinear jet scheme and establish the numerical analysis used to prove that the method yields convergent infinite time limits. Moreover, it is demonstrated how the new method produces superior long-time accuracy relative to classical nonlinear schemes. We close with a broader discussion and outlook in \cref{sec:conclusions}.

\vspace{1.5em}
\section{Notion of convergence and formulation of the problem}
\label{sec:convergence_and_the_problem}
To study numerical schemes for \cref{eq:advection_1d} in terms of their long-time, or even infinite time, accuracy, a departure from established numerical analysis notions of convergence of numerical approximations is needed, as follows. Traditionally, one chooses a fixed finite final time $t_\mathrm{f}$, and considers a sequence of meshes, parameterized by a mesh-size $h$. On each mesh, with time step $\Delta t = \frac{\mu h}{a}$ (where $0<\mu<1$ is the fixed Courant-Friedrichs-Lewy (CFL) number \cite{CourantFriedrichsLewy1928}) one computes a numerical approximation $\boldsymbol{U}^{h,t_\mathrm{f}}$ at the fixed final time $t_\mathrm{f}$. The numerical scheme is then called \emph{convergent} if the error $\boldsymbol{\varepsilon}^{h,t_\mathrm{f}} = \|\mathcal{I}(\boldsymbol{U}^{h,t_\mathrm{f}})-\boldsymbol{u}^{t_\mathrm{f}}\|$ satisfies $\boldsymbol{\varepsilon}^{h,t_\mathrm{f}} \to 0$ as $h\to 0$, where $\boldsymbol{u}^{t_\mathrm{f}}$ is the true solution at time $t_\mathrm{f}$, the norm $\|\cdot\|$ is a suitable function space norm \cite{LeVeque2007}, and $\mathcal{I}(\boldsymbol{U}^{h,t_\mathrm{f}})$ extends $\boldsymbol{U}^{h,t_\mathrm{f}}$ to the full domain via a suitable interpolation (see \cref{subsec:qualitative_analysis} and \cref{subsec:two_jet_interpretations}).

The proposed new notion of convergence considers the error as a function of both $h$ and time $t_n$, where $t_n=n \Delta t$, $n \in \mathbb{N}$. For instance, given $h$ with $\frac{1}{h}=m \in \mathbb{N}$ and $t_n$, compute the numerical approximation (with $\Delta t = \frac{\mu h}{a}$) at time $t_n$, denoted $\boldsymbol{U}^{h,t_n}$. Then define the error $\boldsymbol{\varepsilon}^{h,t_n} = \|\mathcal{I}(\boldsymbol{U}^{h,t_n})-\boldsymbol{u}^{t_n}\|$, using the same norm as above. With this bivariate error, the limit $h\to 0$ with $nh$ fixed (i.e., point-wise convergence in time) recovers the traditional notion of convergence. However, one can also require more restrictive notions on the limit $h\to 0$. The ideal situation, which is the strongest notion of convergence that we establish here, is the commuting limits property
\begin{equation}
\label{commuting_limits}
\lim_{h\to 0}\limsup_{n\to\infty} \boldsymbol{\varepsilon}^{h,t_n}
= \limsup_{n\to\infty}\lim_{h\to 0} \boldsymbol{\varepsilon}^{h,t_n}\;.
\end{equation}
Here the limits $\limsup_{n\to\infty}$ are to be understood in a moving frame of reference $x-at$, in line with the true solution of \cref{eq:advection_1d} which is $u(x,t_n) = u_0(x-at_n)$, or $u(x,t_n) = u_0(\textrm{mod}(x-at_n,1))$ on the periodic domain $[0,1]$. Note that the $\limsup_{n\to\infty}$ is used because a common occurrence in numerical methods is that they produce a quasi-periodic temporal evolution of the error. So an error limit may not exist in the strict sense, and the $\limsup$ is the right measure of the maximum error over time in such a quasi-periodic case.

Our objective is to have a numerical scheme that produces accurate numerical solutions at any time step. In the case of \cref{eq:advection_1d}, we want a numerical scheme that mimics the translation property of the advection equation, i.e., the method correctly shifts a numerical approximation from some instance in time to the next time step. Given that on the periodic domain $x\in [0,1]$ the solution returns to its initial configuration every $\frac{1}{a}$ times, a numerical solution is said to be a \emph{fixed point} if it agrees at time $t+\frac{n}{a}$, for some integer $n$, with its state at time $t$. Moreover, given an interpolant $\mathcal{I}$ that defines a numerical solution on the whole domain (see above), a numerical approximation is said to be a \emph{single step fixed point} if the scheme precisely moves it from one time step to the next time step. Hence, having a single step fixed point for all steps is a special case of having a fixed point solution. Finally, we call a scheme an \emph{exact method} if any smooth solution can be approximated by a convergent sequence of fixed points (see \cref{subsec:numerical_analyis_jet_scheme} for establishing these properties).

We see that exact schemes are characterized by the commuting limits property in \cref{commuting_limits}. If a numerical scheme is convergent in the traditional sense, then for any fixed time with $nh$ fixed, $\lim_{h\to 0} \boldsymbol{\varepsilon}^{h,t_n}=0$ by definition. So $\limsup_{n\to\infty}\lim_{h\to 0} \boldsymbol{\varepsilon}^{h,t_n}=0$. In order to check whether a convergent method is \emph{exact}, it suffices to verify that
\begin{equation*}
\lim_{h\to 0}\limsup_{n\to\infty} \boldsymbol{\varepsilon}^{h,t_n}
= 0\;.
\end{equation*}

\vspace{1.5em}
\section{Linear methods}
\label{sec:linear_methods}
The first question is: can linear schemes have the commuting limits property \cref{commuting_limits}, or even more: do linear schemes exist that precisely shift the numerical solutions (to constant-coefficient linear advection problems) from one time step to next time step? Moreover, if such numerical solutions exist, then are those exact solutions rich enough (i.e., form a dense set) to allow the approximation of any arbitrary initial conditions from a class of functions?

Given a grid with mesh size $h$, let $\boldsymbol{U}^{h,t_n}$ denote the state vector of the numerical approximation at the $n$-th time step. There is a wide variety of numerical methodologies, for example: finite difference methods \cite{LeVeque2007} choose the components of $\boldsymbol{U}^{h,t_n}$ to be approximations to the solution at grid points; finite volume methods \cite{Godunov1959, LeVeque2002} use cell averages; discontinuous Galerkin methods \cite{CockburnKarniadakisShu2000, CockburnShu2001} use polynomial moments; jet schemes \cite{SeiboldRosalesNave2012, ChidyagwaiNaveRosalesSeibold2012} use point-wise function values and derivatives. However, what is common in \emph{linear} methods (i.e., schemes without limiters \cite{VanLeer1973} or other nonlinear components), is that the effects of many steps are fully captured by understanding a single step of the method. Here we consider two frameworks to understand the long time solutions of linear methods. Given $h$ and $\Delta t$ related via a CFL number, we either take a finite domain with periodic boundary conditions (to demonstrate numerical examples), or we consider an infinite domain (to carry out a Von Neumann stability analysis \cite{LeVeque2007} without boundary condition artifacts). We consider the recurrence relation
\begin{equation}
\label{recurrence_relation}
     U_{j}^{n+1}=g(\xi) U_j^{n}
\end{equation}
to obtain numerical solutions $\boldsymbol{U}^{h,t_n}=\left\{ U_{j}^{n} \right\}_{j=-\infty}^{\infty}$ on an infinite grid at time $t_n$, where the \emph{amplification factor} $g(\xi)$ is a function of the wave number $\xi$. Note that, for notational economy, we here omit the superscript $h$ in the components of $\boldsymbol{U}^{h,t_n}$ and write $n$ instead of $t_n$. For a finite grid with periodic boundary conditions, $X_m = \left\{x_{0}, x_{1}, \ldots, x_{m-1}\right\}$, where $x_{j} = j h$ for $j = 0,1,\dots, m-1$ and $h = \frac{1}{m}$, using the analogous notation, the update rule of the method can be written as
\begin{equation*}
\boldsymbol{U}^{h,t_{n+1}} = M_h \boldsymbol{U}^{h,t_n}\;.
\end{equation*}
Now $g(\xi)$ assumes only discrete values, and they are exactly the eigenvalues of the matrix $M_h$. We also assume that the scheme uses the same stencil for each grid point. Then the scheme is defined by a vector $\boldsymbol{c}=(c_0,c_1, \ldots,c_{m-1}) \in \mathbb{R}^m$, and $M_h$ becomes a circulant matrix. The scheme's behavior is then determined by the spectral properties of $M_h$. The matrix $M_h$ and its $\mathrm{j}\text{-th}$ eigenvector are given by
\begin{equation*}
     M_h =\begin{pmatrix}
    {c_{0}} & {c_{1}} & {c_{2}} & {\cdots} & {c_{m-1}} \\
    {c_{m-1}} & {c_{0}} & {c_{1}} & {c_{2}} & {\cdots} \\
    {c_{m-2}} & {c_{m-1}} & {c_{0}} & {\cdots} & {} \\
    {\ddots} & {\ddots} & {\ddots} & {\ddots} & {\ddots} \\
    {c_{1}} & {c_{2}} & {\cdots} & {c_{m-1}} & {c_{0}}
    \end{pmatrix}
    \ \ \text{and} \ \
    v_j=
    \begin{pmatrix}
    {\omega_{m}^{0 j }} \\
    {\omega_{m}^{1 j}} \\
    {\omega_{m}^{2 j}} \\
    {\vdots} \\
    {\omega_{m}^{(m-1) j}}
    \end{pmatrix}\;,
\end{equation*}
corresponding to the eigenvalue $\lambda_{j}=\sum_{k=0}^{m-1} c_{k} \omega_{m}^{k j}$ for $j = 0,1,\ldots, m-1$. Here $\omega_{m}=e^{\frac{2\pi i}{m}}$ and $\omega_{m}^{0},\omega_{m}^{1}, \dots, \omega_{m}^{m-1}$ are the $m$-th roots of unity. Circulant matrices are diagonalizable; let $M_{h}=F_{h}\Lambda_{h}F_{h}^{-1}$ with
$F_h=\left[v_{0} \mid v_{1} \mid \cdots \mid v_{m-1}\right]$ and
\sloppy $\Lambda_{h}=\operatorname{diag}\left(\lambda_0,\lambda_1, \ldots,\lambda_{m-1}\right)$. $F_h$ is a symmetric matrix with $F_{h}^{-1}=\frac{1}{m}F_{h}^{*}$. We obtain the numerical approximation after $n$ steps, $\boldsymbol{U}^{h,t_n} = (M_{h})^{n} \boldsymbol{U}^{h,t_0} = F_{h}(\Lambda_{h})^{n}F_{h}^{-1}\boldsymbol{U}^{h,t_0} = F_{h}(\Lambda_{h})^{n}\alpha^{h} = \sum_{j=0}^{m-1} v_j \alpha_j^{h} (\lambda_j)^n\;,$
where $\alpha^{h} = F_{h}^{-1}\boldsymbol{U}^{h,t_0} = \begin{bmatrix} \alpha_0^{h},\alpha_1^{h}, \ldots, \alpha_{m-1}^{h} \end{bmatrix}^{T}$.
All the components corresponding to $\lambda_j$ with $|\lambda_j|<1$ will decay to zero as $n\to\infty$, and all that remains are the components corresponding to $\lambda_j$ with $|\lambda_j| = 1$. Let $\mathcal{C} = \{j : |\lambda_j| = 1\}$, and for all $j\in \mathcal{C}$ write $\lambda_j = e^{\mathrm{i}\theta_j}, \ \theta_j \in [0,2\pi)$. Then the infinite time limit becomes
\begin{equation*}
\lim_{n\to\infty} \boldsymbol{U}^{h,t_n}
= \sum_{j\in \mathcal{C}} v_j \alpha_j^h e^{\mathrm{i}\theta_j n}\;.
\end{equation*}
Before we move on to providing a generic proof on an infinite grid, we demonstrate the long time solutions via three illustrative examples on a finite domain.

\subsection{Illustrative examples}
\Cref{fig:upwind} shows the (well-known) behavior of the basic upwind method, whose behavior is typical for many other schemes as well. All eigenvalues are truly inside the unit disc, except for one at $\lambda_1 = 1$ whose eigenvector corresponds to a constant function (a consequence of having a consistent scheme). Hence, as $n\to\infty$, all modes decay except for the constant mode, and the numerical solution always decays towards a constant function (the average of the initial condition), independent of $h$, thus $\lim_{h\to 0}\limsup_{n \to\infty}\boldsymbol{\varepsilon}^{h,t_n} = \textrm{const}$. Consequently, the scheme is not exact, and the limits $h\to 0$ and $\limsup_{n \to\infty}$ do not commute.

\begin{figure}[ht]
\begin{minipage}[b]{0.37\textwidth}
\includegraphics[width=\textwidth]{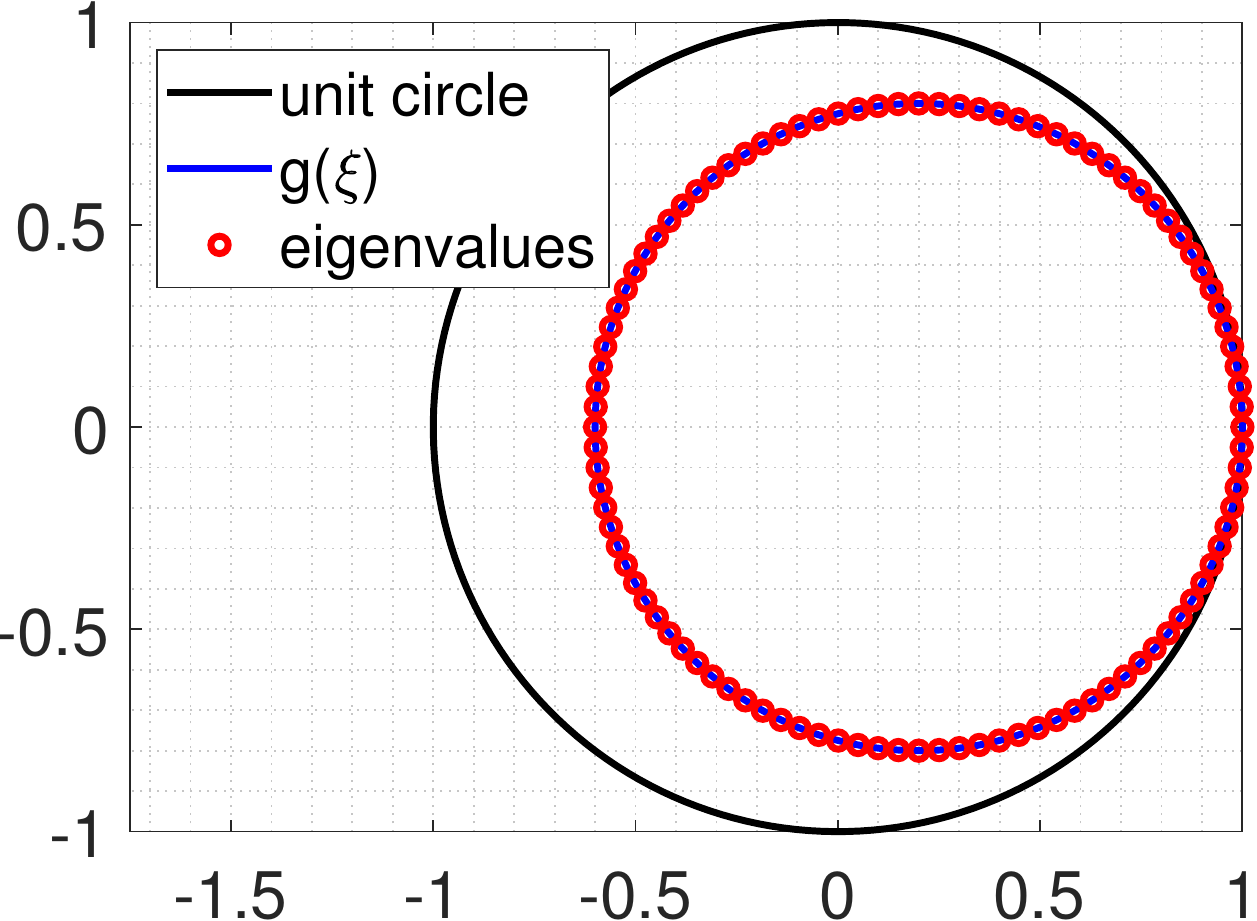}
\end{minipage}
\hfill
\begin{minipage}[b]{.57\textwidth}
\includegraphics[width=\textwidth]{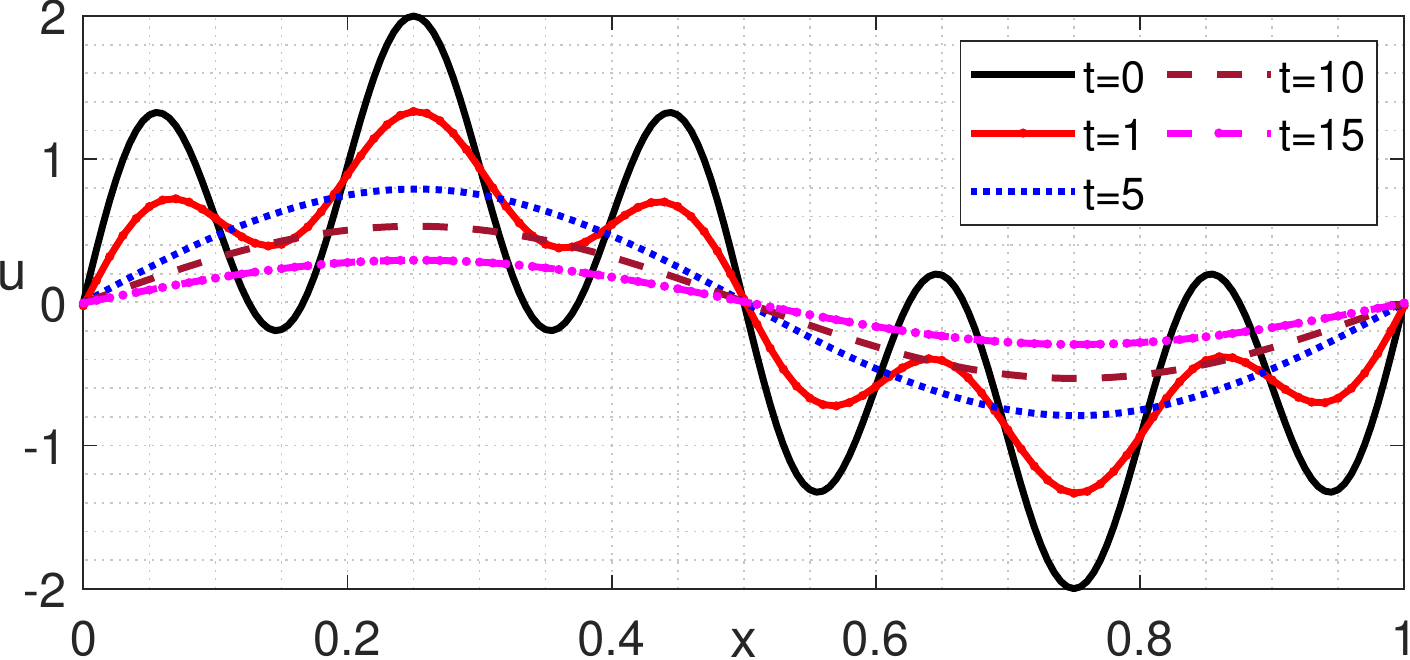}
\end{minipage}
\caption{Upwind method for $u_t+u_x=0$ with $\mu=0.8 \ (\mathrm{CFL})$ and $m=100$ grid points: Amplification factor $g(\xi)=(1-\mu)+\mu e^{-i \xi h}$, and eigenvalues (left) and numerical solution at different times (right). All the eigenvalues except $1$ are clearly inside the unit circle, resulting in decay towards a constant.}
\label{fig:upwind}
\end{figure}


Our second example considers a method where the spatial derivative is approximated by standard second order central finite differences ($u_x \approx \frac{u(x+h)-u(x-h)}{2h}$), thus the corresponding method of lines matrix has purely imaginary eigenvalues. To make the scheme stable, we need to time-step via an ODE solver whose stability region contains at least some portion of the imaginary axis. We choose the Shu-Osher method \cite{ShuOsher1988} (a SSP scheme \cite{GottliebShuTadmor2001}), whose intercept on the imaginary axis renders the scheme conditionally stable. The eigenvalues of $M_{h}$ (shown in \cref{fig:SSP_time_central_space}) again have only $\lambda_1 = 1$ on the unit circle; all other modes decay to zero as $n\to\infty$. So the $n\to\infty$ limit is the same as with upwind. Note that for this scheme (and even more so for methods of higher order), some modes decay slowly (and slower and slower as $h\to 0$). However, the limits $h\to 0$ and $\limsup_{n\to\infty}$ nevertheless do not commute.

\begin{figure}[ht]
\begin{minipage}[b]{.37\textwidth}
\includegraphics[width=\textwidth]{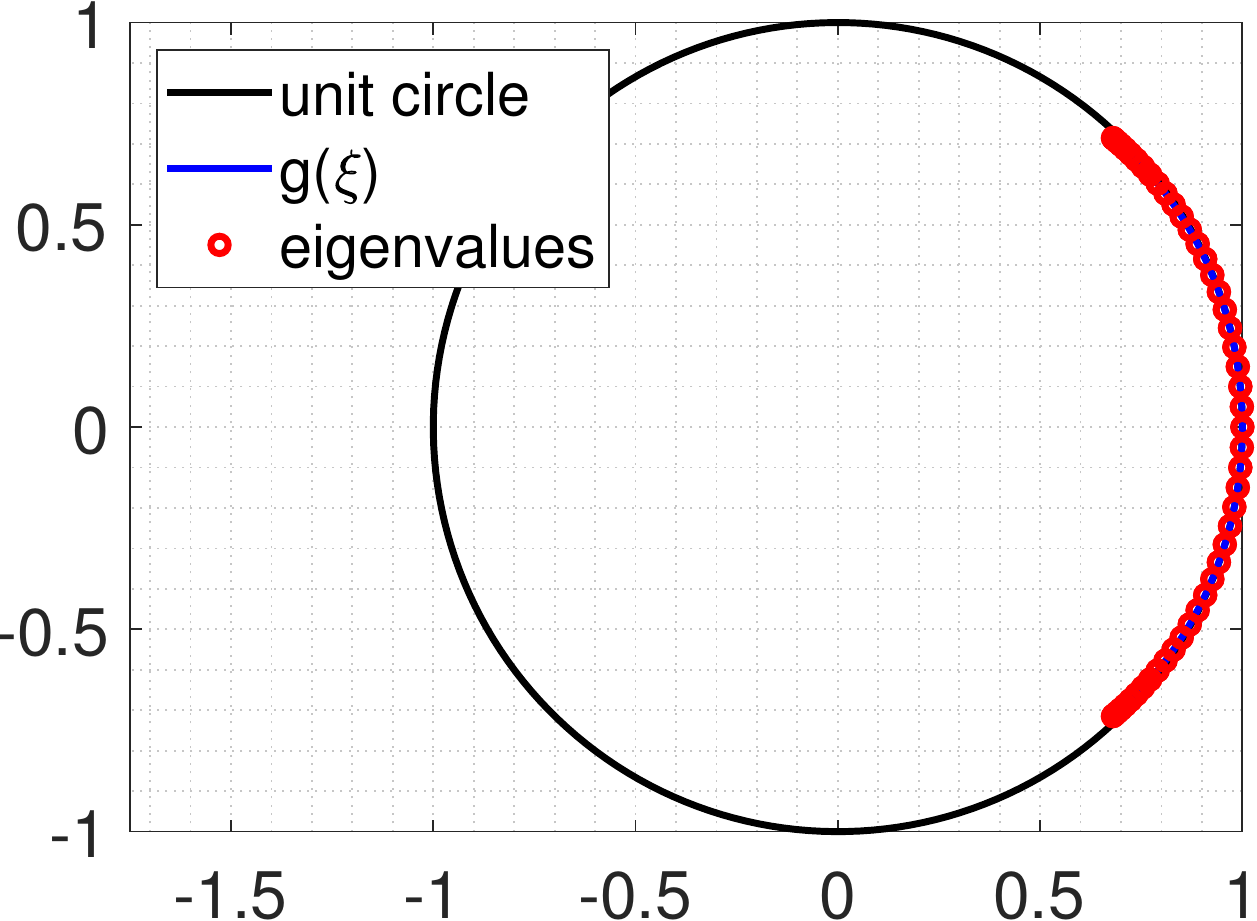}
\end{minipage}
\hfill
\begin{minipage}[b]{.57\textwidth}
\includegraphics[width=\textwidth]{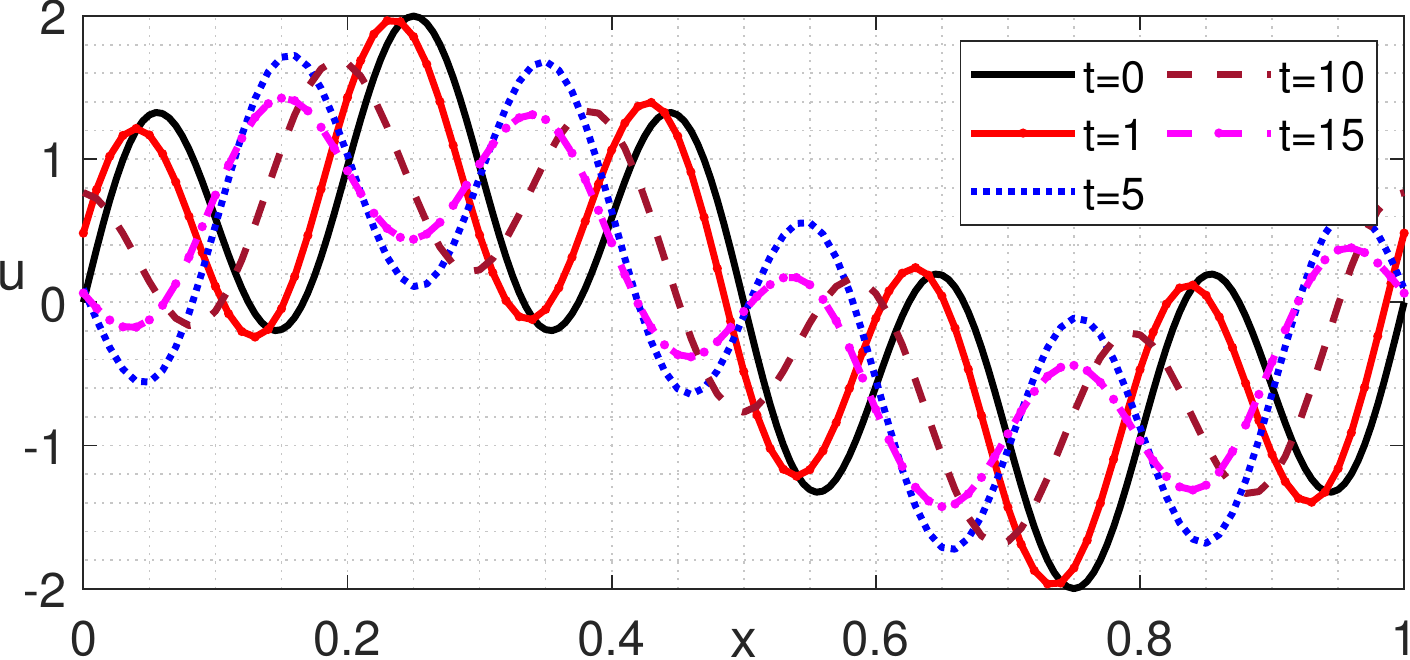}
\end{minipage}
\caption{SSP-RK$3$ in time and central in space method for $u_t+u_x=0$ with $\mu=0.8 $ and $m=100$ grid points: Amplification factor $g(\xi)=\left(1-\frac{\mu^2}{4}\right)+\frac{\mu^2}{4} \cos{2 \xi h}- i \left[ \frac{\mu^3}{24}\sin{3 \xi h}+\left( \mu-\frac{\mu^3}{8}\right)\sin{\xi h} \right]$, and eigenvalues (left) and numerical solution at different times (right). Despite high order, all the eigenvalues except $1$ are inside the unit circle, resulting in decay towards a constant.}
\label{fig:SSP_time_central_space}
\end{figure}


The last example is a method where the spatial derivative is approximated by the fourth-order central finite difference $u_x \approx \frac{-u(x+2 h)+8 u(x+h)-8 u(x-h)+  u(x-2 h)}{12 h}$, leading to a method of lines matrix with all purely imaginary eigenvalues. Now, we time-step via Crank-Nicolson (which we use for the sake of argument --- implicit methods are non-popular for advection problems \cite{LeVeque2007}). Because the imaginary axis is exactly the stability boundary for Crank-Nicolson, the resulting scheme is neutrally stable. In contrast to the other examples above, the third scheme shown in \cref{fig:CN_time_4th_space} has all eigenvalues on the unit circle, i.e., the scheme is purely dispersive. Consequently, all modes contained in the initial ($t=0$) approximation persist for all time. However, they move at different speeds, and generally do not reproduce the initial conditions after integer multiples of $1/a$, a fact that holds true broadly, see below.

\begin{figure}[ht]
\begin{minipage}[b]{.37\textwidth}
\includegraphics[width=\textwidth]{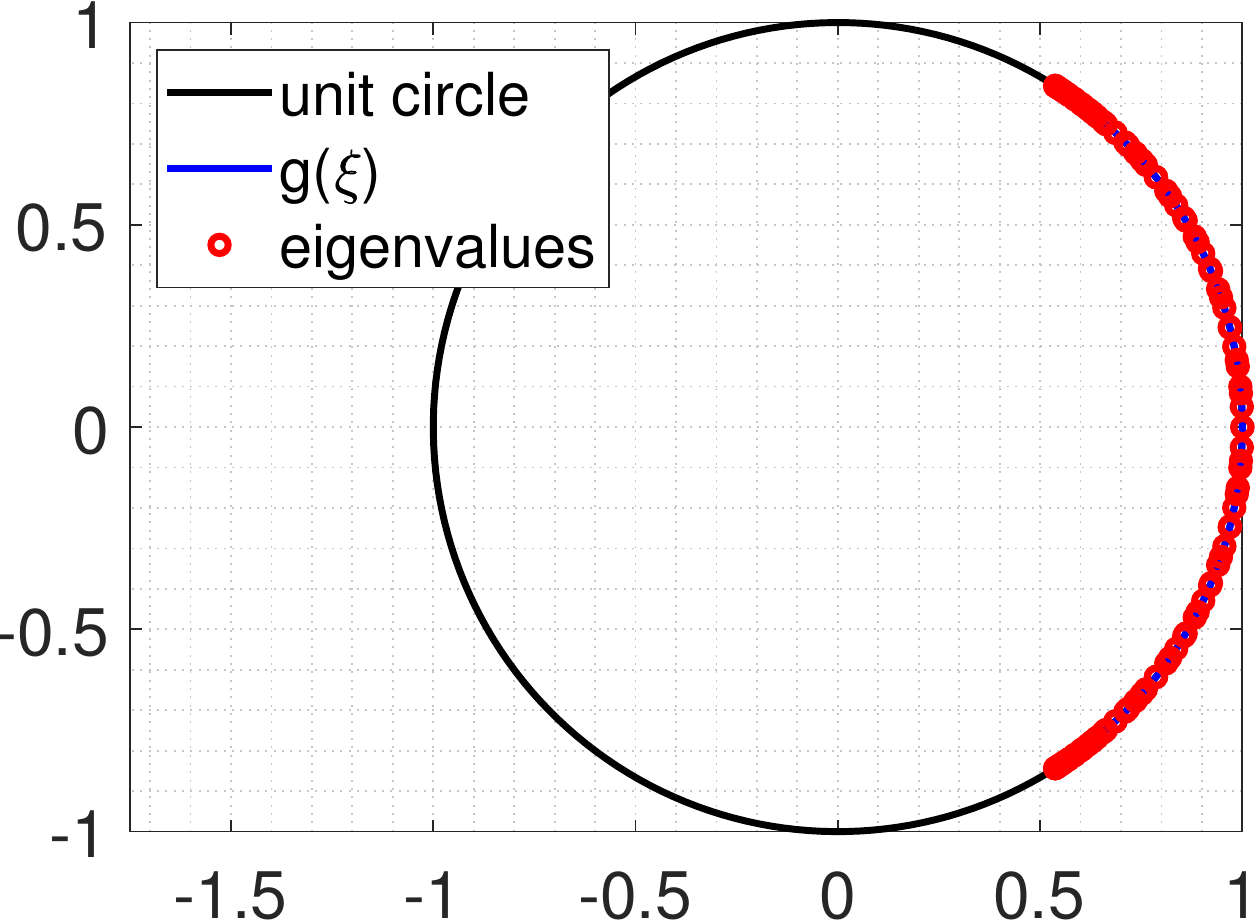}
\end{minipage}
\hfill
\begin{minipage}[b]{.57\textwidth}
\includegraphics[width=\textwidth]{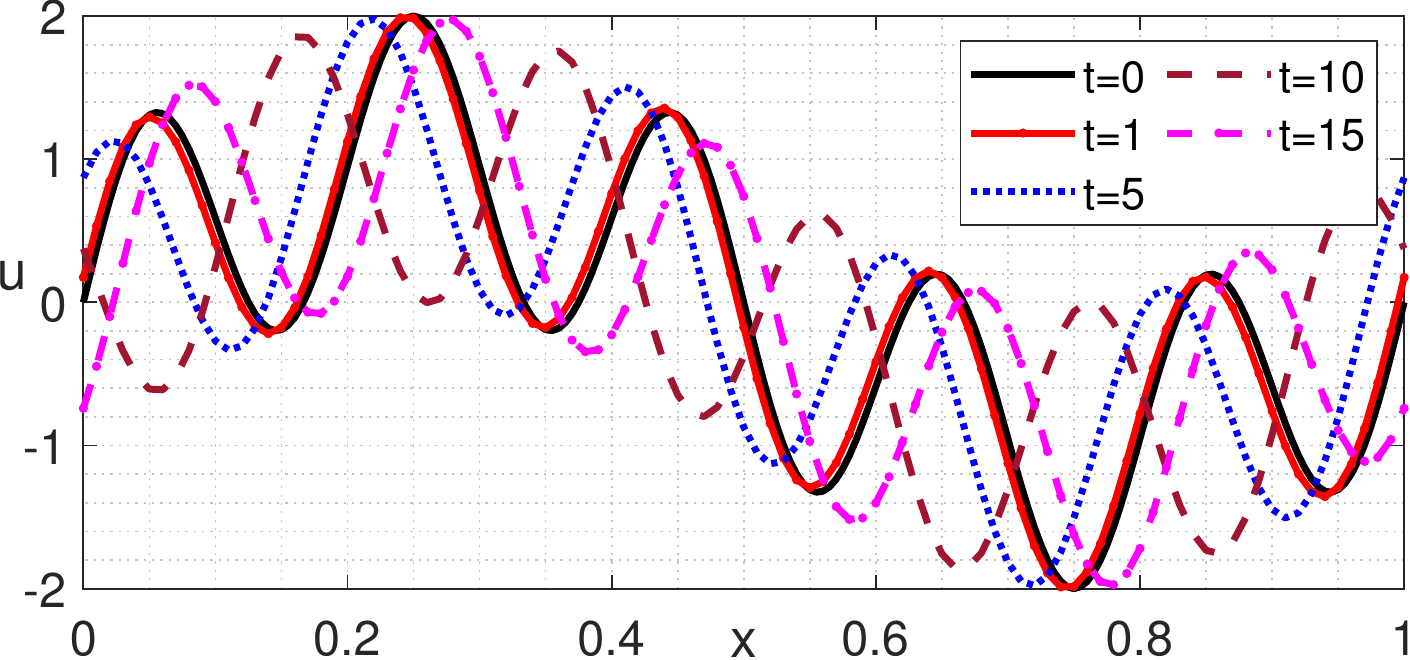}
\end{minipage}
\caption{Crank-Nicolson in time and fourth-order in space method for $u_t+u_x=0$ with $\mu=0.8 \ (\mathrm{CFL})$ and $m=100$ grid points: Amplification factor $g(\xi)=\frac{1-i\left( \frac{2 \mu}{3}\sin{\xi h}-\frac{\mu}{12}\sin{2 \xi h} \right)}{1+i\left( \frac{2 \mu}{3}\sin{\xi h}-\frac{\mu}{12}\sin{2 \xi h} \right)}$, and eigenvalues (left) and numerical solution at different times (right). All eigenvalues are on the unit circle, hence no modes decay; however, they move at wrong speeds.}
\label{fig:CN_time_4th_space}
\end{figure}


\subsection{Qualitative analysis}
\label{subsec:qualitative_analysis}
The numerical results above highlight some principles how linear schemes tend to fail to yield commuting limits. To prove that (generic) linear methods in fact do not have commuting limits property \cref{commuting_limits}, we consider problem \cref{eq:advection_1d} on the whole real line and assume the following about a linear scheme:
\begin{assumption}\label{assumptions_on_lin_methods}
~
    \begin{enumerate}[(i)]
        \item Consider a sequence of schemes (parameterized by $h$) with fixed CFL number $\mu = \frac{a \Delta t}{h}$. Moreover, assume $0<\mu<1$ to exclude trivial schemes.
        \label{assumptions_on_lin_methods_5}
        \item The scheme is defined with constant coefficients $c_{j}^{h}$ for each grid point. 
        \label{assumptions_on_lin_methods_1}
        \item The scheme uses same update stencil for each grid point at every time step, and (for fixed $\mu$) the stencil is independent of $h$, i.e., $c_{j}^{h}=c_j \; \forall \; h$. \label{assumptions_on_lin_methods_2}
        \item The scheme is stable and consistent.\label{assumptions_on_lin_methods_3}
        \item The initial condition $u_{0} \in C(\mathbb{R}) \cap {L}^2(\mathbb{R})$. \label{assumptions_on_lin_methods_4}
    \end{enumerate}
\end{assumption}

We employ Von Neumann stability analysis to characterize the stability assumption of the scheme. The analysis is particularly simple for a Cauchy problem \cite{LeVeque2007}, i.e., we consider \cref{eq:advection_1d} on the real line $-\infty < x < \infty$ with no boundaries. We discretize the whole domain and approximate the initial condition to get the grid function $\boldsymbol{u}^{h,0}=\left\{ u_{j}^{0} \right\}_{j=-\infty}^{\infty}$, where $u_{j}^{0}$ is the evaluation of initial condition at grid points $x_j=j h$ for $j \in \mathbb{Z}$. As above, here we discard the superscript $h$ in the components of $\boldsymbol{u}^{h,0}$, for notational economy. As we consider a sequence of meshes and approximate the initial condition on infinite grids $h\mathbb{Z}$, here we appropriately use the scaled $2$-norm, $\|\boldsymbol{u}^{h,0}\|_{{\ell}^2}:=\left(h \sum_{j=-\infty}^{\infty}\left|u_{j}^{0}\right|^{2}\right)^{1 / 2}$ that is consistent with the notation in \cite{LeVeque2007}. Then from \cref{assumptions_on_lin_methods_4}, it follows that $\boldsymbol{u}^{h,0} \in {\ell}^2(h \mathbb{Z})$. Now we can express $u_{j}^{0}$ by using the semidiscrete Fourier transform \cite{trefethen2000spectral}, defined by the map
\begin{align*}
    \mathcal{F} \colon \left({\ell}^2(h \mathbb{Z}),\|.\|_{{\ell}^2} \right) &\to \left({{L}}^2\left[-\frac{\pi}{h},\frac{\pi}{h}\right],\|.\|_{{{L}}^2} \right) \\
    \boldsymbol{u}^{h,0} &\mapsto \mathcal{F}(\boldsymbol{u}^{h,0})=\hat{u}(\xi)={h} \sum_{j=-\infty}^{\infty} u_{j}^{0} e^{-i j h \xi}\;,
\end{align*}
and its inverse map
\begin{align*}
    \mathcal{F}^{-1} \colon \left({{L}}^2\left[-\frac{\pi}{h},\frac{\pi}{h}\right],\|.\|_{{{L}}^2} \right) &\to  \left({\ell}^2(h \mathbb{Z}),\|.\|_{{\ell}^2} \right) \\
    \hat{u} &\mapsto \mathcal{F}^{-1}(\hat{u})=\boldsymbol{u}^{h,0}= \left\{ u_{j}^{0} \right\}_{j=-\infty}^{\infty}\;,
\end{align*}
where $u_{j}^{0} =\frac{1}{2 \pi}\int_{-\frac{\pi}{h}}^{\frac{\pi}{h}} \hat{u}(\xi) \ e^{i j h \xi} \textrm{d} \xi$. Note that our physical variable is discrete and unbounded, and corresponds to the space $\left({\ell}^2(h \mathbb{Z}),\|.\|_{{\ell}^2} \right)$, while the frequency variable is bounded and continuous, and refers to the space $\left({{L}}^2\left[-\frac{\pi}{h},\frac{\pi}{h}\right],\|.\|_{{{L}}^2} \right)$.
By \cref{assumptions_on_lin_methods_1} and \cref{assumptions_on_lin_methods_2}, the one-step update rule for any linear scheme using the stencil $[x_j-s h,x_j-(s-1) h,\ldots, x_j+s h]$ can be written as
\begin{equation}
\label{lin_method_update_rule}
    U_{j}^{n+1} = \sum_{\nu=-s}^{s} c_{\nu}U_{j+\nu}^{n}\;.
\end{equation}
As the initial condition $u_{0} \in C(\mathbb{R}) \cap {L}^2(\mathbb{R})$, the approximated initial grid function $\boldsymbol{u}^{h,0}$ is in ${\ell}^2(h \mathbb{Z})$. We decompose the initial grid function into Fourier modes by using the semidiscrete Fourier transform and express the grid values as $u_{j}^{0}  =\frac{1}{2 \pi}\int_{-\frac{\pi}{h}}^{\frac{\pi}{h}} \hat{u}(\xi) \ e^{i j h \xi} \textrm{d} \xi, \ \forall \ j.$
We now look at the effect of the numerical method on each of the Fourier modes and combine all the modified modes to obtain the numerical solution at some given time. Using equation \cref{recurrence_relation} and the numerical scheme \cref{lin_method_update_rule} on the grid function $U_j^{n}=e^{i \xi j h}$ of single wave number $\xi$, we calculate $g(\xi)=\sum_{\nu=-s}^{s} c_{\nu}e^{i \nu h \xi}$. Using the recurrence relation \cref{recurrence_relation} on each mode and combining them all we obtain the numerical solution $\boldsymbol{U}^{h,t_n}$ at $t_n$ given by its components
\begin{equation*}
    U_{j}^{n} = \frac{1}{2 \pi}\int_{-\frac{\pi}{h}}^{\frac{\pi}{h}}\hat{u}(\xi)[g(\xi)]^n e^{i \xi j h}\textrm{d} \xi, \ \forall \ j\;.
\end{equation*}
We seek to compare the numerical solution with the true solution in continuum $L^2$ norm (as we employ the Fourier modes), so it is natural to consider the trigonometric interpolant based on the grid function $\left\{ U_{j}^{n}\right\}_{j=-\infty}^{\infty}$ that is defined by
\begin{equation}
\label{eq:numerical_solution_trig_interp}
    \mathcal{I}(\boldsymbol{U}^{h,t_n})(x) := \frac{1}{2 \pi}\int_{-\frac{\pi}{h}}^{\frac{\pi}{h}}\hat{u}(\xi)[g(\xi)]^n e^{i \xi x}\textrm{d} \xi, \ \forall \ x \in \mathbb{R}\;,
\end{equation}
as our numerical solution function (defined for all $x$) at $t_n$.
A natural (yet not the only possible) way to initialize the numerical scheme is with the discrete data $\left\{ u_{j}^{0} \right\}_{j=-\infty}^{\infty}$, directly recovered from the initial profile. The numerical schemes now evolve this discrete solution over time, and the interpolant \eqref{eq:numerical_solution_trig_interp} defines a corresponding continuum function.

Here we are only interested in studying the schemes' evolution error, so we employ an error measure that yields zero error at time $t=0$: rather than comparing the numerical approximation \eqref{eq:numerical_solution_trig_interp} to the true initial profile $u_0$, we instead compare it to a \emph{proxy} of the true solution, the trigonometric interpolant of the initial grid function, $\mathcal{I}(\boldsymbol{u}^{h,0})(x)  =\frac{1}{2 \pi}\int_{-\frac{\pi}{h}}^{\frac{\pi}{h}} \hat{u}(\xi) \ e^{i \xi x} \textrm{d} \xi, \ \forall \,  x \in \mathbb{R}$. Moreover, like the true solution is just the shift of the initial profile, so is the proxy of the true solution at time $t_n$ obtained via shifting the function $\mathcal{I}(\boldsymbol{u}^{h,0})(x)$ by $n \Delta t$ to yield
\begin{equation}
\label{true_solution}
     \mathcal{I}(\boldsymbol{u}^{h,t_n})(x):=\frac{1}{2 \pi}\int_{-\frac{\pi}{h}}^{\frac{\pi}{h}} \hat{u}(\xi) e^{i (x-n \Delta t) \xi} \,\textrm{d} \xi, \ \forall \ x  \in \mathbb{R}\;.
\end{equation}
Having a stable scheme requires that $|g(\xi)|\leq 1$. All components corresponding to $\xi$ with $|g(\xi)|< 1$ will decay to zero as $n\to\infty$, and all that remains are the components corresponding to $\xi$ with $|g(\xi)|= 1$. Let $\mathcal{C} = \{\xi : |g(\xi)| = 1\}$, and for all $\xi \in \mathcal{C}$ write $g(\xi) = e^{\mathrm{i}\theta(\xi)}$.
This means that the exact numerical solutions of a linear scheme are precisely the Fourier modes that have their amplification on the unit circle and move with the correct speed as the true solution. For a linear scheme to have the commuting limits property \cref{commuting_limits}, we would need $\mathcal{C}$ to possess increasingly many elements that move with the correct speed, as $h\to 0$ (and thus the number of grid points goes to infinity). However, all linear schemes satisfying \cref{assumptions_on_lin_methods} have the property that as $h\to 0$, the Fourier modes approaches an algebraic curve in the complex plane. Consequently, one of two scenarios arises: either $\mathcal{C}$ has only finitely many elements, or all Fourier modes lie on unit circle. In the latter case, the scheme is purely dispersive; however, expect for trivial cases, the wave speeds of the different Fourier modes do not match up in a way to recover an exact solution. Regarding this result, we now present the following technical lemma, which we are going to use to prove our goal.
\begin{lemma}
\label{lemma1}
     Under \cref{assumptions_on_lin_methods}, one has
     \begin{equation*}
         \limsup_{n \to \infty}\left |[g(\xi)]^n- e^{-i \xi n \mu h}\right | \geq C\neq 0\;,
     \end{equation*}
    where $\xi$ is a wave number, $\mu$ is the $\mathrm{CFL}$ number, and $C$ is a positive constant.
\end{lemma}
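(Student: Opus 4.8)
The plan is to establish the estimate for every wave number $\xi$ outside a finite ``resonant'' set, by reducing to the scalar power sequence $\bigl(g(\xi)^n\bigr)_n$ compared with the exact symbol sequence $\bigl(e^{-i\xi n\mu h}\bigr)_n$. Two reductions come first. By the constant-stencil hypotheses of \cref{assumptions_on_lin_methods}, after the substitution $\theta=h\xi$ the amplification factor is a fixed trigonometric polynomial $\tilde g(\theta)=\sum_{\nu=-s}^{s}c_\nu e^{i\nu\theta}$ (independent of $h$) and the exact symbol becomes $e^{-i\mu\theta}$; and by the stability hypothesis, $|g(\xi)|\le 1$ for all $\xi$. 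So it suffices to work at a single $\theta\in[-\pi,\pi]$.

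Next I would identify the resonant set $\mathcal{R}=\{\theta\in[-\pi,\pi]:\tilde g(\theta)=e^{-i\mu\theta}\}$. The function $f(\theta)=\tilde g(\theta)-e^{-i\mu\theta}$ extends to an entire function of $\theta$, so unless $f\equiv 0$ it has only finitely many zeros in $[-\pi,\pi]$, i.e.\ $\mathcal{R}$ is finite. That $f\not\equiv 0$ is the heart of the matter and is where the hypothesis $0<\mu<1$ is used: if $\tilde g(\theta)=e^{-i\mu\theta}$ on $[-\pi,\pi]$, then comparing Fourier coefficients on that interval is contradictory, because the left-hand side has only the finitely many nonzero coefficients $c_{-s},\dots,c_{s}$, while the $k$-th Fourier coefficient of $\theta\mapsto e^{-i\mu\theta}$ equals $\frac{(-1)^{k}\sin(\pi\mu)}{\pi(\mu+k)}$, which is nonzero for \emph{every} $k\in\mathbb{Z}$ since $0<\mu<1$ gives $\sin(\pi\mu)\ne 0$ and $\mu\notin\mathbb{Z}$. (For $\mu=1$ one does have $\tilde g(\theta)=e^{-i\theta}$, the trivial exact shift; that is precisely the case excluded.) Thus $\mathcal{R}$ is finite, hence of measure zero --- the form in which this lemma will feed into the $L^2$ bound on the full numerical solution.

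Finally, fix $\xi$ with $\theta=h\xi\notin\mathcal{R}$, so $g(\xi)\ne e^{-i\xi\mu h}$, and bound $\limsup_n\bigl|g(\xi)^n-e^{-i\xi n\mu h}\bigr|$ by splitting on stability. If $|g(\xi)|<1$, the reverse triangle inequality gives $\bigl|g(\xi)^n-e^{-i\xi n\mu h}\bigr|\ge 1-|g(\xi)|^n\to 1$. If $|g(\xi)|=1$, factoring out the unit-modulus exact symbol yields $\bigl|g(\xi)^n-e^{-i\xi n\mu h}\bigr|=\bigl|e^{in\beta}-1\bigr|$ with $\beta\not\equiv 0\pmod{2\pi}$ (as $g(\xi)$ and $e^{-i\xi\mu h}$ are distinct points of the unit circle); then $\{e^{in\beta}:n\ge 0\}$ is dense in the circle when $\beta/(2\pi)$ is irrational (limsup $=2$) and equals the set of $q$-th roots of unity when $\beta=2\pi p/q$ in lowest terms with $q\ge 2$ (limsup $=\max_{0\le k<q}\bigl|e^{2\pi ik/q}-1\bigr|\ge\sqrt3$). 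In all cases the limsup is $\ge 1$, giving the claim with $C=1$.

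The one genuine obstacle is the non-coincidence step, namely $f\not\equiv 0$: everything downstream of it (the dissipative/dispersive dichotomy, the roots-of-unity estimate, the finiteness of $\mathcal{R}$) is routine. The Fourier-coefficient comparison disposes of it cleanly and makes transparent why $0<\mu<1$ is indispensable.
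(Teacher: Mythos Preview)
Your proof is correct and follows essentially the same route as the paper's: the substitution $\theta=h\xi$ to an $h$-independent trigonometric polynomial, the analyticity/identity-theorem step to force $\tilde g\equiv e^{-i\mu\,\cdot}$ from infinitely many coincidences, the contradiction with $0<\mu<1$, and the dissipative/dispersive dichotomy with a roots-of-unity bound. The only noteworthy differences are cosmetic: your non-coincidence step computes the Fourier coefficients of $e^{-i\mu\theta}$ on $[-\pi,\pi]$ explicitly (all nonzero since $\sin(\pi\mu)\ne 0$), whereas the paper invokes linear independence of the characters $\{e^{i\nu\theta}\}$ to reach the same conclusion; and your final constant $C=1$ is the sharper and correct choice, since the dissipative case gives limsup exactly $1$, which lies below the paper's $C=|e^{i2\pi/3}-1|=\sqrt{3}$ that strictly speaking only covers the dispersive branch.
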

\begin{proof}
    The amplification factor is $g(\xi)=\sum_{\nu=-s}^{s} c_{\nu}e^{i \nu h \xi}$. By a change of variable $\omega=\xi h$, $g$ becomes a function of $\omega$, and we denote this function by
    $\tilde{g}(\omega)=\sum_{\nu=-s}^{s} c_{\nu}e^{i \nu \omega}, \ \omega \in [-\pi,\pi].$
    The $\limsup_{n \to \infty}\left |[g(\xi)]^n- e^{-i \xi n \mu h}\right |$ becomes $\limsup_{n \to \infty}\left |[\tilde{g}(\omega)]^n- e^{-i \omega n \mu }\right |$.
    Now, if $|\tilde{g}(\omega)|<1$ for $\omega  \in [{-\pi},{\pi}]$, then
    \begin{equation*}
    \limsup_{n \to \infty}\left |[\tilde{g}(\omega)]^n- e^{-i \omega n \mu }\right |=1\;.
    \end{equation*}
    For $\omega$'s with $|\tilde{g}(\omega)|=1$ we can write $\tilde{g}(\omega)=e^{i \theta(\omega)}$, where $\theta(\omega) \in [0,2\pi)$ and then
    \begin{align*}
        \limsup_{n \to \infty}\left |[\tilde{g}(\omega)]^n- e^{-i \omega n \mu }\right | & = \limsup_{n \to \infty}\left |e^{i n \theta(\omega) }- e^{-i \omega n \mu }\right | = \limsup_{n \to \infty}\left |e^{i[ \theta(\omega) + \omega \mu]n }- 1 \right | \\
        & =\limsup_{n \to \infty}\left |z^n(\omega)- 1 \right |, \ \textrm{where} \ z(\omega)=e^{i[ \theta(\omega) + \omega \mu]}\;.
    \end{align*}
    Clearly, $|z(\omega)|=1$ for all $\omega \in [-\pi,\pi]$, and $z(0)=1$ as $g(0)=1$ follows from the consistency (\cref{assumptions_on_lin_methods_3}) of the scheme. We claim that $z(\omega)=1$ only for finitely many $\omega$'s in $[-\pi,\pi]$. If not, then it follows that $\tilde{g}(\omega)  = e^{-i \omega \mu}$ on a subset $D$ of $[-\pi,\pi]$ with a limit point. As $\tilde{g}(\omega)$ and $e^{-i \omega \mu}$ are analytic functions, we have $\tilde{g}(\omega)  = e^{-i \omega \mu}$ on the whole domain $[-\pi,\pi]$ (a property of complex analytic functions), which implies $\sum_{\nu=-s}^{s} c_{\nu}e^{i \nu \omega} = e^{-i \omega \mu}, \ \forall \, \omega \in [-\pi,\pi]$. This is a contradiction to \cref{assumptions_on_lin_methods_5} because $\{e^{i \nu \omega}:\nu=-s,\dots,s\}$ is a linearly independent set, and the only way this identity is true is when $\mu=\nu$, $c_{-\nu}=\delta_{\nu \mu}$ and $|\mu|\leq s$. But this will lead to a trivial numerical scheme. So we conclude that $z(\omega) \neq 1$ almost everywhere in $[-\pi,\pi]$.
Then for almost all $\omega \in [-\pi,\pi]$, $\limsup_{n \to \infty}\left |[\tilde{g}(\omega)]^n- e^{-i \omega n \mu }\right | =\limsup_{n \to \infty}\left |[z^n(\omega)- 1 \right | \geq \underbrace{\left |e^{i\frac{2 \pi}{3} }- 1 \right |}_{=:C} \;.$
    \begin{figure}[htb]
    \vspace{-1em}
    \begin{center}
        \begin{tikzpicture}[scale=.50]
        \draw[black, thick] (0,0) rectangle (5,5);
        \draw[blue, thick](2.5,2.5) circle (1.5);
        \draw[gray, thick] (2.5,0) -- (2.5,5);
        \draw[gray, thick] (0,2.5) -- (5,2.5);
        \draw[gray, thick] (4,2.5) -- ({1.5*cos(120)+2.5},{1.5*sin(120)+2.5});
        \node[below =2pt of {(3,3.9)}] {c};
        \draw[gray, thick] (2.5,2.5) -- ({1.5*cos(120)+2.5},{1.5*sin(120)+2.5});
         \node[below =2pt of {(4.5,4.9)}] {$\mathbb{C}$};
        \filldraw [black] ({1.5*cos(120)+2.5},{1.5*sin(120)+2.5}) circle (2pt) node[above]{$z$};
        \filldraw [black] ({1.5*cos(240)+2.5},{1.5*sin(240)+2.5}) circle (2pt)
        node[below]{$z^2$};
        \filldraw [black] ({1.5*cos(360)+2.5},{1.5*sin(360)+2.5}) circle (2pt)
        node[below right]{$z^3$};
        \coordinate (O) at (2.5,2.5);
        \coordinate (A) at (4,2.5);
        \coordinate (B) at ({1.5*cos(120)+2.5},{1.5*sin(120)+2.5});
        \draw (2.5,2.5) -- (2.8,2.5) arc (0:120:0.3cm)-- cycle;
        \node[below right=2pt of {(2.5,2.5)}] {$\frac{2 \pi}{3}$};
        \end{tikzpicture}
        \caption{Geometry for bound on the estimate.}
        \label{fig:lower_bound}
    \end{center}
    \end{figure}
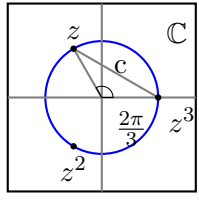
The last inequality follows by considering all possible values of the argument of the complex number $z=e^{i[ \theta(\omega) + \omega \mu]} \neq 1$ that is taken to arbitrary powers $n$, see \cref{fig:lower_bound}. Hence, in any case
\begin{align*}
    \limsup_{n \to \infty}\left |[g(\xi)]^n- e^{-i \xi n \mu h}\right |\geq C \neq 0, \ \text{independent of} \ h.
\end{align*}
\end{proof}

\begin{theorem}
    A linear scheme for \cref{eq:advection_1d} under \cref{assumptions_on_lin_methods} cannot have the commuting limits property, i,e.,
    \begin{equation*}
    \lim_{h\to 0}\limsup_{n\to\infty} \boldsymbol{\varepsilon}^{h,t_n} \neq 0\;.
    \end{equation*}
\end{theorem}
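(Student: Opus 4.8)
The plan is to pass to frequency space via Plancherel's identity, reduce the matter to the mode-wise estimate of \cref{lemma1}, and then extract a non-vanishing lower bound on $\limsup_{n\to\infty}\boldsymbol{\varepsilon}^{h,t_n}$ by a time-averaging (Ces\`aro) argument. First I would note that, by \cref{eq:numerical_solution_trig_interp} and \cref{true_solution}, the error function $\mathcal{I}(\boldsymbol{U}^{h,t_n})-\mathcal{I}(\boldsymbol{u}^{h,t_n})$ is the (continuum) inverse Fourier transform of $\hat{u}(\xi)\bigl([g(\xi)]^n-e^{-\mathrm i\xi n\mu h}\bigr)$ restricted to $\bigl[-\tfrac{\pi}{h},\tfrac{\pi}{h}\bigr]$, so that Plancherel's identity (with the scaled norms fixed in \cref{subsec:qualitative_analysis}) gives
\[
\bigl(\boldsymbol{\varepsilon}^{h,t_n}\bigr)^2=\frac{1}{2\pi}\int_{-\pi/h}^{\pi/h}|\hat{u}(\xi)|^2\,\bigl|[g(\xi)]^n-e^{-\mathrm i\xi n\mu h}\bigr|^2\,\mathrm d\xi\;,
\]
while Parseval's identity for the semidiscrete Fourier transform gives $\frac{1}{2\pi}\int_{-\pi/h}^{\pi/h}|\hat{u}(\xi)|^2\,\mathrm d\xi=\|\boldsymbol{u}^{h,0}\|_{\ell^2}^2$.

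Next I would analyze the time-average $G_N(\xi):=\frac1N\sum_{n=1}^N\bigl|[g(\xi)]^n-e^{-\mathrm i\xi n\mu h}\bigr|^2$ pointwise in $\xi$. By stability $|g(\xi)|\le 1$. Where $|g(\xi)|<1$, the geometric factor decays and $G_N(\xi)\to 1$; where $|g(\xi)|=1$, writing $g(\xi)=e^{\mathrm i\theta(\xi)}$ and $\psi(\xi):=\theta(\xi)+\mu h\xi$, the integrand equals $2-2\cos\bigl(n\psi(\xi)\bigr)$ and its Ces\`aro average tends to $2$ precisely because $e^{\mathrm i\psi(\xi)}\neq 1$ for almost every such $\xi$ --- which is exactly the non-resonance fact established inside the proof of \cref{lemma1} (otherwise $\tilde g$ would coincide with $e^{-\mathrm i\omega\mu}$ by analyticity, contradicting \cref{assumptions_on_lin_methods_5}). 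Hence $G_N\to G_\infty$ a.e.\ with $G_\infty\ge 1$ a.e., and since $0\le|\hat{u}(\xi)|^2G_N(\xi)\le 4|\hat{u}(\xi)|^2\in L^1\bigl(\bigl[-\tfrac{\pi}{h},\tfrac{\pi}{h}\bigr]\bigr)$, dominated convergence yields
\[
\lim_{N\to\infty}\frac1N\sum_{n=1}^N\bigl(\boldsymbol{\varepsilon}^{h,t_n}\bigr)^2=\frac{1}{2\pi}\int_{-\pi/h}^{\pi/h}|\hat{u}(\xi)|^2\,G_\infty(\xi)\,\mathrm d\xi\;\ge\;\|\boldsymbol{u}^{h,0}\|_{\ell^2}^2\;.
\]
Since $\bigl(\boldsymbol{\varepsilon}^{h,t_n}\bigr)^2\ge 0$, its $\limsup$ in $n$ is at least its Ces\`aro limit, so $\limsup_{n\to\infty}\boldsymbol{\varepsilon}^{h,t_n}\ge\|\boldsymbol{u}^{h,0}\|_{\ell^2}$ for every $h$.

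Finally I would let $h\to 0$. For any nontrivial $u_0$ there is an interval on which $|u_0|\ge\delta>0$, whence $\|\boldsymbol{u}^{h,0}\|_{\ell^2}^2=h\sum_j|u_0(jh)|^2$ stays bounded below by a positive constant for all small $h$ (and in fact converges to $\|u_0\|_{L^2}^2$ under mild regularity). Therefore $\limsup_{n\to\infty}\boldsymbol{\varepsilon}^{h,t_n}$ is bounded away from $0$ uniformly in small $h$, and consequently $\lim_{h\to 0}\limsup_{n\to\infty}\boldsymbol{\varepsilon}^{h,t_n}\neq 0$.

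The step I expect to be the main obstacle is the interchange of the limit in $n$ with the integral in $\xi$: the naive bound $\limsup_n\int\ge\int\limsup_n$ is false, and the dispersive modes (those with $|g(\xi)|=1$) carry no $n$-wise limit, only a positive time-average; so the argument genuinely has to be routed through Ces\`aro means together with dominated convergence, the essential input being the a.e.\ non-resonance $e^{\mathrm i\psi(\xi)}\neq 1$ that is the heart of \cref{lemma1}. (Alternatively, one can upgrade this to a genuine limit of $\bigl(\boldsymbol{\varepsilon}^{h,t_n}\bigr)^2$ via a Riemann--Lebesgue estimate of $\int_{\{|g|=1\}}|\hat{u}(\xi)|^2e^{\mathrm i n\psi(\xi)}\,\mathrm d\xi$, using that $\psi$ is real-analytic and non-constant on each connected component of $\{|g|=1\}$; the Ces\`aro route is more elementary.)
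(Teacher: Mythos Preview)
Your argument is correct and follows the paper's overall framework---pass to frequency space via Parseval/Plancherel and invoke the mode-wise non-resonance of \cref{lemma1}---but it handles the crucial limit interchange differently and, in fact, more carefully. The paper simply writes
\[
\limsup_{n\to\infty}\left(\int_{-\pi/h}^{\pi/h}|\hat u(\xi)|^2\,\bigl|[g(\xi)]^n-e^{-\mathrm i\xi n\mu h}\bigr|^2\,\mathrm d\xi\right)^{1/2}
=\left(\int_{-\pi/h}^{\pi/h}|\hat u(\xi)|^2\,\Bigl(\limsup_{n\to\infty}\bigl|[g(\xi)]^n-e^{-\mathrm i\xi n\mu h}\bigr|\Bigr)^2\,\mathrm d\xi\right)^{1/2}
\]
and then applies \cref{lemma1} pointwise; that equality is not justified as written (neither Fatou nor dominated convergence gives $\limsup$ inside in the needed direction, and on the dispersive set $\{|g|=1\}$ the integrand has no $n$-limit). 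Your detour through Ces\`aro means is exactly the right fix: the time-average $G_N(\xi)$ has a genuine pointwise limit ($1$ on $\{|g|<1\}$, $2$ a.e.\ on $\{|g|=1\}$ by the non-resonance from \cref{lemma1}), the uniform bound $G_N\le 4$ licenses dominated convergence, and the elementary inequality $\limsup_n a_n\ge \lim_N\frac1N\sum_{n\le N}a_n$ for nonnegative sequences closes the loop. As a bonus, your route yields the cleaner lower bound $\limsup_n\boldsymbol{\varepsilon}^{h,t_n}\ge\|\boldsymbol{u}^{h,0}\|_{\ell^2}$, without the geometric constant $C=\bigl|e^{2\pi\mathrm i/3}-1\bigr|$ that the paper carries.
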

\begin{proof}
    For a given $h$ and $t_n$, we measure the error in the continuum $\|\cdot\|_{L^2}$ norm,  $ \boldsymbol{\varepsilon}^{h,t_n} = \|\mathcal{I}(\boldsymbol{U}^{h,t_n})-\mathcal{I}(\boldsymbol{u}^{h,t_n})\|_{L^2}.$
     Using \cref{eq:numerical_solution_trig_interp}, \cref{true_solution} and the fact that $\Delta t = \mu h$, we get $    \big(\mathcal{I}(\boldsymbol{U}^{h,t_n})-\mathcal{I}(\boldsymbol{u}^{h,t_n})\big)(x)=\frac{1}{2 \pi}\int_{-\frac{\pi}{h}}^{\frac{\pi}{h}}\hat{u}(\xi)\left([g(\xi)]^n- e^{-i \xi n \mu h} \right) e^{i \xi x}\textrm{d} \xi, \ \forall \ x  \in \mathbb{R}\;.$
    Let
    \begin{equation*}
    d(\xi) =
    \begin{cases}
    \hat{u}(\xi)\left([g(\xi)]^n- e^{-i \xi n \mu h} \right) & \text{for~} x \in [-\frac{\pi}{h},\frac{\pi}{h}]\\
    0 & \text{otherwise} \;,
    \end{cases}
    \end{equation*}
     then clearly $d(\xi) \in {{L}}^2(\mathbb{R}) $, as $\hat{u} \in {{L}}^2\left[-\frac{\pi}{h},\frac{\pi}{h}\right]$ and $|g(\xi)| \leq 1$, by \cref{assumptions_on_lin_methods_3}. Also by construction, $d(\xi)$ is the Fourier transform of the function $\big(\mathcal{I}(\boldsymbol{U}^{h,t_n})-\mathcal{I}(\boldsymbol{u}^{h,t_n})\big)(x)$, for $x \in \mathbb{R}$, following from \cite{trefethen2000spectral}. Using Parseval's identity we can write
    \begin{equation*}
         \boldsymbol{\varepsilon}^{h,t_n} = \|\big(\mathcal{I}(\boldsymbol{U}^{h,t_n})-\mathcal{I}(\boldsymbol{u}^{h,t_n})\big)(x)\|_{L^2} = \|d(\xi)\|_{{{L}}^2}\;.
    \end{equation*}
    First taking $\limsup_{n \to \infty}$ and then taking the limit as $h \to 0$ of the error $\boldsymbol{\varepsilon}^{h,t_n}$, we get
    \begin{align*}
         \lim_{h\to 0}\limsup_{n\to\infty} \boldsymbol{\varepsilon}^{h,t_n} & = \lim_{h\to 0}\limsup_{n\to\infty} \|d(\xi)\|_{{{L}}^2} \\
         & = \lim_{h\to 0}\limsup_{n\to\infty} \left( \int_{-\frac{\pi}{h}}^{\frac{\pi}{h}} \left| \hat{u}(\xi)\left([g(\xi)]^n- e^{-i \xi n \mu h} \right)\right|^{2} \textrm{d} \xi \right)^{\frac{1}{2}} \\
         & = \lim_{h\to 0}\left( \int_{-\frac{\pi}{h}}^{\frac{\pi}{h}} \left(\left| \hat{u}(\xi) \right| \limsup_{n\to\infty}  \left| [g(\xi)]^n- e^{-i \xi n \mu h} \right|\right)^{2} \textrm{d} \xi \right)^{\frac{1}{2}} \\
         & \overset{\text{lemma~}\ref{lemma1}}\geq C \lim_{h\to 0}  \left( \int_{-\frac{\pi}{h}}^{\frac{\pi}{h}}\left| \hat{u}(\xi) \right|  \textrm{d} \xi \right)^{\frac{1}{2}} = \ C \|\hat{u}\|_{{L}^2}>0\;.
    \end{align*}
\end{proof}

\subsection{Quantitative scaling}
Having shown that linear schemes are not suitable to produce accurate solutions for really long times, we now provide scaling laws of the errors incurred by these schemes depending on the grid size $h$ and the final time $t_\textrm{f}$. In order to have a chance to obtain an accurate infinite time limit solution, we require that, for a fixed h, the numerical schemes produce non-constant numerical approximations as $n \to \infty$. In practice, we cannot take arbitrarily small $\Delta t$ or $h$, but we are happy to accept $h$ for which the numerical scheme will produce the desired error to exhibit its correct order of convergence. For a given final time $t_\mathrm{f}$, there exists a critical value of $h=h^{*}$, for which the method will produce that desired error. Now, if we take a larger final time $t_\mathrm{f}$, then it will be reasonable to believe that we would require a smaller value of $h^{*}$ to produce the desired error. So, the critical value of $h$ depends on the choice of final time $t_\mathrm{f}$, and it decreases as $t_\mathrm{f}$ increases. To rigorously understand this fact we consider \eqref{eq:advection_1d} and solve this problem by second-order accurate standard Lax-Wendroff method \cite{LeVeque2007}. The modified equation \cite{LeVeque2007} for this numerical method is given by
\begin{equation}\label{eq:LW modified Eq}
v_t+a v_x=-\frac{1}{6} a h^2 \left( 1- { \left(\frac{a \Delta t}{h}\right) }^2 \right) v_{xxx}.
\end{equation}
The true solution of \eqref{eq:LW modified Eq} captures the behavior of the numerical solutions produced by the Lax-Wendroff method well. As $\frac{a \Delta t}{h}=\mu$ is a fixed number (CFL), we can write this equation as $ v_t+a v_x=Ch^2 v_{xxx},$ where $C=-\frac{a}{6} ( 1-{\mu}^2 )$. The differential operators $\frac{\partial}{\partial x}$ and $\frac{\partial^3}{\partial x^3}$ commute, which implies that we can solve the equation by splitting into advection and dispersion problems in any order. To understand the error made by the scheme, we can ignore the advection part as it is also present in the original problem \eqref{eq:advection_1d}. So, it suffices to consider only $v_t=Ch^2 v_{xxx}$  to understand the change in the numerical solution over time. We compare the solution of this equation, denoted by $w_h(x,t)$, with the solution of the original problem with zero velocity, i.e., $u(x,t)=u_{0}(x)$. By the time-scaling $t \mapsto h^2t$, we can deduce that $w_{h}(x,t)=w_{1}(x,h^2t)$. We measure the error for a given final time $t_\mathrm{f}$ and the corresponding critical resolution $h^{*}$, by $\boldsymbol{\varepsilon}^{h^{*},t_\mathrm{f}}=||w_1(\cdot,(h^{*})^2 t_\mathrm{f})-u_0(\cdot)||_{{L}^1}$ in the continuum $L^1$ norm. We want this error to be a constant which is the desired error to get second order accuracy for the Lax-Wendroff method. The desired error depends on $h^{*}$ and $t_\mathrm{f}$, and this forces $(h^{*})^2 t_\mathrm{f}=C_1,$ for some constant $C_1$. In fact, for a $p$-th order linear scheme, self-similarity arguments based on the modified equation reveal that the error generally scales like $\boldsymbol{\varepsilon}^{h,t_\mathrm{f}} \propto h^p t_\mathrm{f}$, i.e., doubling $t_\mathrm{f}$ also doubles the error, and $h^* \propto t_\mathrm{f}^{-1/p}$. Even though for a fixed $t_\mathrm{f}$ the error decreases as $h\to 0$, for a fixed $h$ the error grows as $t_\textrm{f} \to\infty$, thus preventing the commuting limits property \eqref{commuting_limits}. The typical shape of the bivariate error function $\boldsymbol{\varepsilon}^{h,t_\mathrm{f}}$ is shown in \cref{fig:error_convergence} (left panel), here for the Lax-Wendroff method. For any fixed $t_\mathrm{f}$, the error $\boldsymbol{\varepsilon}^{h,t_\mathrm{f}}$ is L-shaped, with a transition mesh size $h^*(t_\mathrm{f})$ below which a clean second order manifests.

\begin{figure}[ht]
\begin{minipage}[b]{.49\textwidth}
\includegraphics[width=\textwidth]{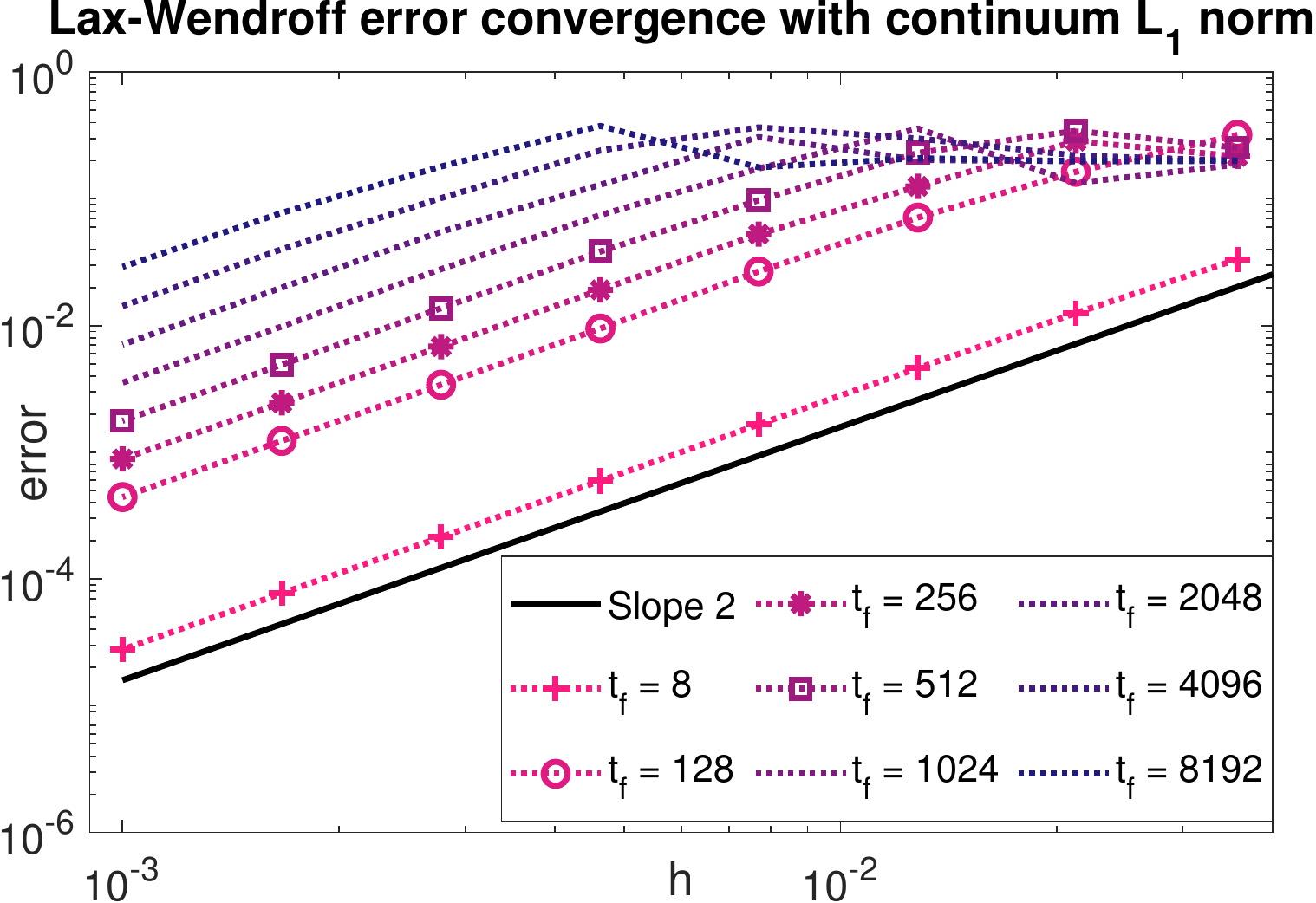}
\end{minipage}
\hfill
\begin{minipage}[b]{.49\textwidth}
\includegraphics[width=\textwidth]{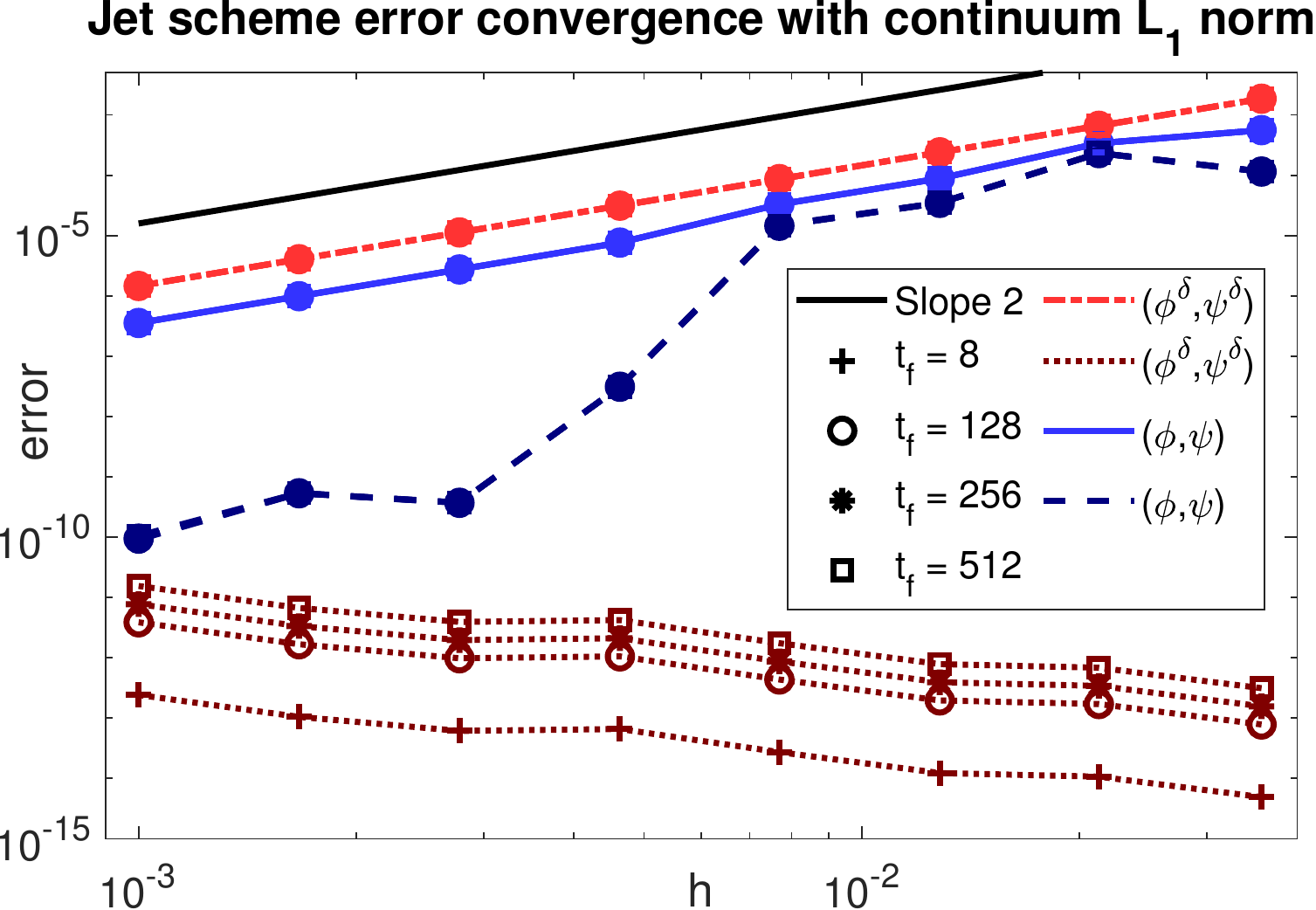}
\end{minipage}
\caption{Approximation error $\varepsilon^{h,t_\mathrm{f}}$ as function of $h$ and $t_\mathrm{f}$. \quad Left: For the linear (second order) scheme, $\varepsilon^{h,t_\mathrm{f}}$ decreases with $h$ but steadily grows in $t_\mathrm{f}$; hence there is no $h\to 0$ convergence that is uniform in $t_\mathrm{f}$. \quad Right: For the nonlinear jet scheme, the light blue/red curves show the total error at different final times for two approximations of a smooth initial function: (i)~direct evaluation of $u_0(jh)$ and $u_0'(jh)$, denoted $(\phi,\psi)$; and (ii)~the special strategy in \cref{modified_IC}, denoted $(\phi^{\delta},\psi^{\delta})$. The corresponding errors of only the jet scheme evolution (without i.c.\ approximation errors) are shown by the dark blue/red curves for (i)/(ii), respectively. For the jet scheme, the total error $\varepsilon^{h,t_\mathrm{f}}$ decreases with $h$ and it is (essentially) constant w.r.t.~$t_\mathrm{f}$. Hence the scheme has the potential to be exact. Even though there is no traditional convergence rates due to the structure of fixed points (see \cref{sec:error convergence} for a more detailed discussion of the convergence of jet schemes), the jet scheme's evolution error (dark blue) is observed to decrease (in $h$) more rapidly than the approximation error of the i.c.}
\label{fig:error_convergence}
\end{figure}

\vspace{1.5em}
\section{Jet scheme with nonlinear interpolation}
\label{sec:jet scheme}
We now demonstrate that there \emph{are} in fact numerical methods that are exact. To that end, we use the jet scheme formalism, established in \cite{NaveRosalesSeibold2010, SeiboldRosalesNave2012, ChidyagwaiNaveRosalesSeibold2012} and extended, e.g., in \cite{Salac2011}. Jet schemes are semi-Lagrangian approaches that track and evolve derivative information in addition to function values. In the original papers, the derivative information enables one to achieve high order, while maintaining optimally local update rules, based on Hermite interpolation. This concept can also be extended to higher derivatives and higher space dimensions \cite{SeiboldRosalesNave2012}. The jet scheme methodology is as follows.
\begin{method}\label{numerical_method}
~
    \begin{enumerate}[1.]
        \item Given function value ($\phi$) and derivative ($\psi$) approximations at all grid points at time $t$, to obtain the state at time $t+\Delta t$, first, for each grid point, solve the characteristic equation $\dot{x} = a(x,t)$ backwards from $t+\Delta t$ to $t$ to find the associated foot point $x_\mathrm{ft}$.
        \item Evaluate (based on the grid data) the Hermite interpolant $P(x_\mathrm{ft})$ and its derivative $P'(x_\mathrm{ft})$ at this foot point.
        \item Then, using these evaluations, evolve the characteristic equations $\dot{\phi} = 0$ and $\dot{\psi} = -a_x(x,t)\psi$ (if $a_x=0$, this reduces to $\dot{\psi} = 0$) forward from $t$ to $t+\Delta t$ to yield the new function value and derivative information at the grid points.
    \end{enumerate}
    Suitable approximations of the initial condition are discussed later in this section.
\end{method}

While the jet scheme methodology works in principle with any Hermite interpolant, it has thus far been established only with interpolations that are linear operators (including in the context of the gradient-augmented level set method \cite{NaveRosalesSeibold2010}).
In contrast, here we propose a specific nonlinear interpolant that is tailored to produce the exactness properties requested in \cref{sec:convergence_and_the_problem}. Thus the method is nonlinear as the update rule depends on the state vector nonlinearly.
The nonlinear jet scheme interpolant is defined as follows (in 1D). Given data $\phi(x_\text{L})$, $\phi(x_\text{R})$ (function values) and $\psi(x_\text{L})$, $\psi(x_\text{R})$ (derivatives) we wish to define an interpolant $P(x)$ on $[x_\text{L},x_\text{R}]$. We consider the two linear functions emanating from $x_\text{L}$ and $x_\text{R}$:
\begin{equation*}
L_{\text{L}}(x) := \phi(x_\text{L})+\psi(x_\text{L})(x-x_\text{L})
\quad\text{and}\quad
L_{\text{R}}(x) := \phi(x_\text{R})+\psi(x_\text{R})(x-x_\text{R})\;.
\end{equation*}
If $\psi(x_\text{L}) \neq \psi(x_\text{R})$ and the lines' intersection point $x_\text{k} = \frac{\phi(x_\text{L})-\phi(x_\text{R})-\psi(x_\text{L})x_\text{L}+\psi(x_\text{R})x_\text{R}}{\psi(x_\text{R})-\psi(x_\text{L})}$ lies inside $(x_\text{L},x_\text{R})$, we use the piecewise linear function formed by them (see \cref{fig:nonlinear_interpolant_genuine}):\vspace{-1em}
%
\begin{figure}[htb]
\begin{center}
\begin{minipage}[b]{.35\linewidth}
\centering
\begin{tikzpicture}[scale=0.60]
\draw [thick](-3,0) -- (3,0);
\draw [thick](-2.5,0) -- (-2.5,3);
\draw [black!60!green,ultra thick,-,>=stealth] (-2.8,2.8) -- (-2.2,3.2);
\draw [black!60!green,ultra thick,-,>=stealth] (2.8,3.85) -- (2.2,4.15);
\draw [thick](2.5,0) -- (2.5,4);
\draw [red,thick,dashed] (-2.5,3) -- (0.5,5);
\draw [red,thick,dashed] (0.5,5) -- (2.5,4);
\draw [thick,dashed] (-0.8,0) -- (-0.8,4);
\draw [thick,dashed] (0.5,0) -- (0.5,5);
\filldraw [black] (-2.5,0) circle (2pt); 
\filldraw [black] (2.5,0) circle (2pt); 
\filldraw [red] (-0.8,0) circle (2pt); 
\filldraw [red] (0.5,0) circle (2pt); 
\node[below =2pt of {(-2.5,0)}] {$x_\text{L}$};
\node[below =2pt of {(-0.8,0)}] {$x_\mathrm{ft}$};
\node[below =2pt of {(0.5,0)}] {$x_\text{k}$};
\node[below =2pt of {(2.5,0)}] {$x_\text{R}$};
\filldraw [blue] (-2.5,3) circle (3pt);
\node[above =1pt of {(-1.67,2.0)}, text=blue] {$\phi(x_\text{L})$};
\node[above =1pt of {(-2.6,3.15)}, text=black!60!green] {$\psi(x_\text{L})$};
\node[above =1pt of {(2.4,4.1)}, text=black!60!green] {$\psi(x_\text{R})$};
\filldraw [blue] (-2.5,3) circle (3pt);
\filldraw [blue] (2.5,4) circle (3pt);
\node[above =1pt of {(1.65,3)}, text=blue] {$\phi(x_\text{R})$};
\end{tikzpicture}
\vspace{-.5em}
\label{fig:nonlinear_interpolant_genuine}
\caption{Genuine interpolant.}
\end{minipage}
\hspace{2em}
\begin{minipage}[b]{.35\linewidth}
\centering
\begin{tikzpicture}[scale=0.60]
\draw [thick](-3,0) -- (3,0);
\draw [thick](-2.5,0) -- (-2.5,3);
\draw [black!60!green,ultra thick,-,>=stealth] (-2.7,2.8) -- (-2.25,3.35);
\draw [black!60!green,ultra thick,-,>=stealth] (2.3,3.7) -- (2.7,4.3);
\draw [thick](2.5,0) -- (2.5,4);
\draw [red,thick,dashed] (-2.5,3) -- (2.5,4);
\draw [thick,dashed] (-0.8,0) -- (-0.8,3.3);
\filldraw [black] (-2.5,0) circle (2pt); 
\filldraw [black] (2.5,0) circle (2pt); 
\filldraw [red] (-0.8,0) circle (2pt); 
\node[below =2pt of {(-2.5,0)}] {$x_\text{L}$};
\node[below =2pt of {(-0.8,0)}] {$x_\mathrm{ft}$};
\node[below =2pt of {(2.5,0)}] {$x_\text{R}$};
\filldraw [blue] (-2.5,3) circle (3pt);
\node[above =1pt of {(-1.67,1.9)}, text=blue] {$\phi(x_\text{L})$};
\node[above =1pt of {(-2.5,3.1)}, text=black!60!green] {$\psi(x_\text{L})$};
\node[above =1pt of {(2.3,4.1)}, text=black!60!green] {$\psi(x_\text{R})$};
\filldraw [blue] (2.5,4) circle (3pt);
\node[above =1pt of {(1.65,2.6)}, text=blue] {$\phi(x_\text{R})$};
\end{tikzpicture}
\vspace{-.5em}
\caption{Fallback case.}
\label{fig:nonlinear_interpolant_fallback}
\end{minipage}
\end{center}
\end{figure}
\begin{equation*}
P(x) =
\begin{cases}
L_{\text{L}}(x)& \text{for~} x \le x_\text{k}\\
L_{\text{R}}(x)& \text{for~} x > x_\text{k} \;.
\end{cases}
\end{equation*}
Otherwise, we use the simple linear interpolant
\begin{equation*}
P(x) = \phi(x_\text{L})+\tfrac{\phi(x_\text{R})-\phi(x_\text{L})}{x_\text{R}-x_\text{L}}(x-x_\text{L})
\end{equation*}
as a fallback case (see \cref{fig:nonlinear_interpolant_fallback}). This last situation can be interpreted as having an extra line between the segments $L_{\text{L}}$ and $L_{\text{R}}$ whose length is taken to the maximum. Clearly, the fact that this new interpolant possesses jumps in its derivatives requires particular care in how evaluations of derivatives are defined. We discuss these aspects below.

\subsection{Two equivalent interpretations of jet schemes}
\label{subsec:two_jet_interpretations}
An important property of jet schemes is that they can be interpreted in two ways \cite{SeiboldRosalesNave2012}: (a)~an update rule in a finite-dimensional state space; or (b)~an update rule of functions. Here we establish that duality for the nonlinear scheme defined above.
For a given $m=1/h$, let $\boldsymbol{U}^{h,t_n} = [\phi_1^n,\psi_1^n,\phi_2^n,\psi_2^n,...,\phi_m^n,\psi_m^n]^T \in \mathbb{R}^{2m}$ denote the state vector that defines the jet scheme approximation at time step $n$. Here $\phi_j^n$ and $\psi_j^n$ approximate values and derivatives, respectively, of the true solution at the grid point $x_j$. Moreover, let $\mathcal{H}$ denote the space of all continuous piecewise linear functions on $[0,1]$ with finitely many kinks. With these spaces, \cref{numerical_method} can be written as the update rule
\begin{equation*}
\boldsymbol{U}^{h,t_{n+1}} = \tilde{\mathcal{N}} \boldsymbol{U}^{h,t_{n}}\;,\quad\text{where}\quad \tilde{\mathcal{N}} = \mathcal{E} \circ \mathcal{A} \circ \mathcal{I}\;.
\end{equation*}
Here $\mathcal{A}: \mathcal{H} \rightarrow \mathcal{H}$ is the true solution advection operator (which for problem \eqref{eq:advection_1d} is simply the simple shift operator $\mathcal{A}u(x) = u(x-a\Delta t)$).
It is followed by the evaluation operator $\mathcal{E}: \mathcal{H} \rightarrow \mathbb{R}^{2m}$ that evaluates values and derivatives at the grid points with the convention that left-sided derivative values are taken at kinks (note that the particular choice of convention does change the method, but it does not compromise its exactness property); and preceded by the interpolation operator $\mathcal{I}: \mathbb{R}^{2m} \rightarrow \mathcal{H}$, defined on each grid cell $[x_j,x_{j+1}]$ by the nonlinear interpolant $P(x)$ introduced above.

At $t=0$, an initial state vector $\boldsymbol{U}^{h,0}$ must be defined that represents an approximation $\mathcal{I}(\boldsymbol{U}^{h,0})$ to the true initial condition $u_0$. Methodologies to do so are discussed in \cref{sec:sampling_IC}. With this setup, conducting many steps of the jet scheme can be interpreted in two equivalent ways:
\begin{equation*}
\text{(a)}\;
S^n = \cdots \; \cdots
\mathcal{E} \mathcal{A} \mathcal{I}
\underbrace{\mathcal{E} \mathcal{A} \mathcal{I}}_{\tilde{\mathcal{N}}}
\underbrace{\mathcal{E} \mathcal{A} \mathcal{I}}_{\tilde{\mathcal{N}}}
(\boldsymbol{U}^{h,0})\;,
\ \text{or}\ \text{(b)}\;
S^n = \cdots \; \cdots
\mathcal{E} \mathcal{A}
\underbrace{\mathcal{I} \mathcal{E} \mathcal{A}}_{\mathcal{N}}
\underbrace{\mathcal{I} \mathcal{E} \mathcal{A}}_{\mathcal{N}}
(\mathcal{I} (\boldsymbol{U}^{h,0}))\;.
\end{equation*}
Here $\mathcal{N} = \mathcal{I} \circ \mathcal{E} \circ \mathcal{A}$ is the induced jet scheme one-step operator acting between functions, i.e., the jet scheme is the true solution operator $\mathcal{A}$ followed by the operator $\mathcal{I}\mathcal{E}$. Note that for the linear jet schemes considered in \cite{SeiboldRosalesNave2012}, the operator $\mathcal{I}\mathcal{E}$ is a projection. This last property is generally not given for the nonlinear jet scheme; however, it is not needed for the characterization of the $t\to\infty$ limits of the jet scheme. To conclude, both the finite-dimensional update rule (a), which represents how the scheme is implemented, as well as the update rule of functions, (b), can be employed for the numerical analysis carried out in \cref{subsec:numerical_analyis_jet_scheme}.

\subsection{Approximation of the initial condition}
\label{sec:sampling_IC}
There are many ways how one can approximate a given initial condition $u_0$ via an initial function that is a jet scheme interpolant. For a given $m=1/h$, denote the set of grid points by $X_m=\{x_1,x_2,\dots,x_m\}$. If $u_0\in C^1$, a natural approach, denoted $(\phi,\psi)$, is the direct evaluation of values and slopes on the grid, and the assignment $\phi_j = u_0(x_j)$ and $\psi_j = u_0'(x_j)$ to define the initial jet scheme approximation. This approach, discussed in \cref{subsec:other_ic_approximation}, turns out to be accurate, but it does not guarantee commuting limits.

For that reason, we now introduce a different approximation strategy of the initial condition, denoted $(\phi^{\delta},\psi^{\delta})$, that only requires $u_0\in C^0$ and that establishes the jet scheme's exactness. First, we evaluate $u_0$ on the grid points to obtain preliminary values $\hat{\phi}_j = u_0(x_j)$ for $j=1\,\dots,m$. Without loss of generality, we can assume that no three consecutive points are collinear (if there is such a triple, then we can modify the position of the middle point with minimal error to make them non-collinear). Based on these data $\{(x_1,\hat{\phi}_1),(x_2,\hat{\phi}_2),\cdots,(x_m,\hat{\phi}_m)\}$, let $I(x)$ be the standard piecewise linear interpolation (that connects the dots, forming kinks at the grid points). We now choose a parameter $\delta$ with $\epsilon\ll\delta \ll h$, where $\epsilon$ is the machine precision, and consider the function $I(x+\delta)$. This is the interpolant $I(x)$ shifted to the left by $\delta$, i.e., its kinks are at $x_j-\delta$ for $j=1\,\dots,m$. Hence, $I(x+\delta)$ has well-defined slopes at the grid points, and we can assign
\begin{equation}
\label{modified_IC}
\phi_j = I(x_j+\delta) = \hat{\phi}_{j} + \frac{\hat{\phi}_{j+1}-\hat{\phi}_{j}}{h} \delta
\quad\text{and}\quad
\psi_j = I'(x_j+\delta) = \frac{\hat{\phi}_{j+1}-\hat{\phi}_{j}}{h}\;.
\end{equation}
Below we show that this particular approximation of the initial condition gives rise to the jet scheme's exactness, thus providing a rigorous way to construct methods that possess the commuting limits property. Afterwards, we discuss the pros and cons of other sampling strategies.

\subsection{Quantifying approximation errors and convergence}
\label{sec:error convergence}
Conceptualizing convergence of numerical methods requires defining a distance between the true solution (a function) and the sequence of numerical approximations (as $h\to 0$). This is commonly done via a function space norm (e.g., $L^p$) as in finite elements, or a \emph{scaled} discrete norm that is induced from a continuum norm as in finite differences \cite{LeVeque2007}. The connection between the $h$-scaled grid norm and the continuum norm is: (a)~the former is a quadrature rule for the latter; and moreover: (b)~the former can be defined by means of the latter via an interpolation.

Because jet schemes are fundamentally based on an interpolation step, this last interpretation comes naturally for them: rather than working with a discrete norm on the data $(\phi_j,\psi_j)$, we directly measure errors in a continuum norm, based on the nonlinear Hermite interpolant defined above. In contrast to the Von Neumann analysis for linear methods in \cref{sec:linear_methods}, we do not employ Fourier modes for the analysis of jet schemes. Hence we use the $L^1$ norm.

An important consequence of measuring errors in a continuum norm is that in general there is a nonzero error between the true initial condition and the initial ($t=0$) interpolant that approximates it. Hence, there are two ways to measure the error of the numerical approximation at some time $t=t_\mathrm{f}$: the \emph{total error}, i.e., the $L^1$-distance of the interpolant at $t=t_\mathrm{f}$ to the true solution; and the \emph{evolution error}, i.e., the $L^1$-distance of the interpolant at $t=t_\mathrm{f}$ to a proxy of the true solution, namely the interpolant from $t=0$ translated via \eqref{eq:advection_1d} to $t_\mathrm{f}$. We have already encountered this important distinction in \cref{sec:linear_methods}, and it is equally important for jet schemes.

The right panel of \cref{fig:error_convergence} presents a convergence study of the jet scheme error $\varepsilon^{h,t_\mathrm{f}}$ as function of $h$ and $t_\mathrm{f}$. The light blue/red curves show the total error (at different final times), while the dark blue/red curves display the pure evolution errors (blue vs.\ red are the two different ways to approximate the initial conditions, see \cref{sec:sampling_IC}). The results show that the total error is generally dominated by the approximation error of the initial condition (the $(\phi,\psi)$ strategy yields a factor 2 smaller errors, but the $(\phi^{\delta},\psi^{\delta})$ strategy does not require $u_0$ to be differentiable), while the scheme's evolution error is much smaller.

One can show that the best one can do to approximate a general smooth initial condition by a piecewise linear function is second order (in the number of kinks), so the jet scheme cannot be better than second order if the total error is considered. However, as we are interested in the (loss of) accuracy as $t \to \infty$ of the scheme itself, we focus on the evolution error.

Because the evolution error is the $L^1$-distance between two jet scheme interpolants that are piecewise linear, this error can be calculated up to machine precision by (i)~forming sub-intervals bounded by the kinks (of both interpolants) and the intersection points between the interpolants, and (ii)~integrating the difference on each sub-interval exactly. In contrast, the total error cannot be calculated exactly (for general $u_0$), but it can be approximated very accurately by forming similarly splitting grid cells into sub-intervals and then using Gaussian quadrature on each sub-interval (we find that a 3-point Gauss rule suffices here).

Considering the evolution error in the right panel of \cref{fig:error_convergence} one can observe that the special $(\phi^{\delta},\psi^{\delta})$ approximation strategy \eqref{modified_IC} turns out to yield an exact method by means of creating fixed point solutions (see \cref{subsec:numerical_analyis_jet_scheme}) right from the get go (the error (dark red) is essentially machine precision times the number of time steps taken). In contrast, the $(\phi,\psi)$ approximation strategy yields smaller total errors, but the evolution errors (dark blue) are larger than with $(\phi^{\delta},\psi^{\delta})$. However, they do decay rapidly as $h\to 0$. Moreover, the scheme appears to be exact nevertheless (the errors do not (up to machine precision) grow with $t_\mathrm{f}$), albeit not via fixed point solutions right away. Comparing those results to the Lax-Wendroff method (left panel of \cref{fig:error_convergence}), we can see that the exactness property of the jet scheme results in substantially smaller errors, particularly for large final times (the black reference line is the same line in both panels).

\subsection{Numerical analysis for nonlinear jet scheme}
\label{subsec:numerical_analyis_jet_scheme}
Thus far, the exactness of the jet scheme has been based on numerical evidence. Now we establish the exactness of the methods with the strategy \eqref{modified_IC} via rigorous arguments. Moreover, we provide a pathway towards a novel numerical analysis for the exactness of numerical methods (in one space dimension). Here we assume that the CFL number $\mu \in \mathbb{Q}$, and let $\mu=\frac{p}{q}$ with $p$ and $q$ co-prime. It is numerically observed that the restriction $\mu \in \mathbb{Q}$ can be relaxed, but for the analysis of the jet schemes done here, we stick to this mild assumption.
We exploit the connection between the two interpretations of jet schemes, established in \cref{subsec:two_jet_interpretations}, and employ the function space formulation of the jet scheme to prove its exactness. 
On the periodic domain $[0,1]$ with CFL number $0<\mu<1$ and $h=1/m$, we have $\Delta t = \frac{\mu h}{a}$. Hence, after $N = \frac{1}{\mu h}$ time steps ($\text{as} \ \mu \in \mathbb{Q}$, we can choose $m$ such that $N \in \mathbb{N}$) the true solution has traveled a distance $1$ and thus has returned to its initial configuration, i.e., $u(x,t+\frac{1}{a}) = u(x,t)$ considering periodicity. We consider the $N$-step mapping from the function space $\mathcal{H}$ defined in \cref{subsec:two_jet_interpretations} into itself
\begin{equation*}
{\Psi}^{h} = \mathcal{N}^N : \mathcal{H}\rightarrow \mathcal{H}
\end{equation*}
that is an approximation to the identity map. Because we are now concerned with the $h\to 0$ convergence of the family of methods, we now use the superscript $h$ to denote the maps' and state vectors' dependence on the resolution. As motivated above, our goal is to characterize the functions that are \emph{fixed points} of ${\Psi}^h$, because any such function $\mathcal{I}(\boldsymbol{U}^{h})$, where $\boldsymbol{U}^{h} \in \mathbb{R}^{2m}$, that satisfies ${\Psi}^h \mathcal{I}(\boldsymbol{U}^{h}) = \mathcal{I}(\boldsymbol{U}^{h})$ constitutes an \emph{exact solution}. Moreover, given the family of maps $\Psi^h$, parameterized by $h$, we have an \emph{exact method} if for a given true solution $u_0(x)$, a sequence $\{\mathcal{I}(\boldsymbol{U}^{h})\}_{h\to 0}$ of fixed points (i.e., $\Psi^h \mathcal{I}(\boldsymbol{U}^{h}) = \mathcal{I}(\boldsymbol{U}^{h})$) exists that converges (in $L^1$) to $u_0$ as $h\to 0$.

Now we provide a condition for a function $\mathcal{I}(\boldsymbol{U}^{h})$ to be a fixed point (of the map ${\Psi}^h$) in terms of its kinks' positions. The function $\mathcal{I}(\boldsymbol{U}^{h})$ can have at most one kink in the interior of each cell, and there could be kinks on the boundary of the cell. We divide each of the $m$ cells into $q$ sub-cells of size $h/q$, and we consider these resulting $mq$ sub-cells from left to right. We can now determine the positions of all the kinks of the function $\mathcal{I}(\boldsymbol{U}^{h})$. For kinks that fall on the boundary between adjacent sub-cells, we use the consistent convention to associate them to the left of those two sub-cells.

As established in \cref{subsec:two_jet_interpretations}, the one-step map $\mathcal{N}$ amounts to first shifting the function $\mathcal{I}(\boldsymbol{U}^{h})$ to the right by $\mu h$ via the true solution operator $\mathcal{A}$, followed by the operator $\mathcal{I}\mathcal{E}$ that replaces the shifted function by the jet scheme interpolant based on its values and slopes evaluated at the grid points. Due to the construction of the jet scheme interpolant, the application of $\mathcal{I}\mathcal{E}$ leaves the shifted interpolant $\mathcal{I}(\boldsymbol{U}^{h})(x-\mu h)$ from the prior step unchanged, except on those grid cells that contain two kinks (it is impossible for the shifted interpolant to have more than two kinks per cell). On those cells, the application of $\mathcal{I}\mathcal{E}$ replaces the shifted interpolant by a new genuine piecewise linear function with a single kink in the cell, or by the fallback function.

A key mechanism to finding fixed points of the $N$-step mapping ${\Psi}^{h} = \mathcal{N}^N$ is to characterize functions for which such discrete changes in the approximating function do not occur. We define:
\begin{definition}
A piecewise linear jet scheme interpolant $\mathcal{I}(\boldsymbol{U}^{h})$, defined by a state vector $\boldsymbol{U}^{h}$, is called \textbf{\emph{defective}} if there exists at least one adjacent pair of kinks that have less than $q-1$ sub-cells in between them. Otherwise, it is called \textbf{\emph{non-defective}}.
\end{definition}
\begin{lemma}
If $\mathcal{I}(\boldsymbol{U}^{h})$ is non-defective then it is a fixed point of the mapping ${\Psi}^{h}$.
\end{lemma}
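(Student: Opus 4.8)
The plan is to argue entirely within the function-space formulation of \cref{subsec:two_jet_interpretations}, where the one-step operator is $\mathcal{N} = \mathcal{I}\circ\mathcal{E}\circ\mathcal{A}$ and, for \cref{eq:advection_1d}, $\mathcal{A}$ is the rigid right-shift by $a\Delta t = \mu h$. Since $\mu=\tfrac{p}{q}$, this shift is exactly $p$ sub-cells of width $\tfrac{h}{q}$; on the $mq$ sub-cells it acts as the cyclic relabeling $s\mapsto (s+p)\bmod mq$, and on functions it moves every kink by the same amount. The first observation I would record is therefore that $\mathcal{A}$ maps non-defective interpolants to non-defective interpolants: a rigid shift by an integer number of sub-cells carries the sub-cell containing each kink to its relabeled image, hence preserves the cyclic order of the kinks and the count of sub-cells strictly between each adjacent pair (the ``associate boundary kinks to the left'' convention is respected, since sub-cell boundaries map to sub-cell boundaries).

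The second ingredient is the elementary combinatorial fact that a non-defective interpolant has no grid cell containing two kinks: a grid cell is $q$ consecutive sub-cells, so two kinks in it would have at most $q-2$ sub-cells strictly between them, contradicting the defining inequality $\ge q-1$ for non-defectiveness. Combining this with the structural property already established just before the lemma --- that $\mathcal{I}\mathcal{E}$ leaves the shifted interpolant unchanged on every grid cell not carrying two kinks, and only modifies cells that do (of which, for a shift, there is at most one kink per other cell) --- gives the crucial reduction: if $w$ is non-defective then $\mathcal{A}w$ is non-defective, in particular it has no two-kink cell, so $(\mathcal{I}\mathcal{E})(\mathcal{A}w) = \mathcal{A}w$, i.e.\ $\mathcal{N}(w) = \mathcal{A}w$.

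From here the lemma is a short induction. Writing $w_0 = \mathcal{I}(\boldsymbol{U}^h)$ (non-defective by hypothesis), the reduction gives $w_1 := \mathcal{N}(w_0) = \mathcal{A}(w_0)$, which is again non-defective; inductively $\mathcal{N}^n(w_0) = \mathcal{A}^n(w_0)$ is $w_0$ shifted rigidly by $n\mu h$ and remains non-defective for all $n$. Taking $n = N = \tfrac{1}{\mu h} = \tfrac{mq}{p}$ (an integer for the admissible choices of $m$, as in \cref{subsec:numerical_analyis_jet_scheme}), the accumulated shift is $N\mu h = 1$, i.e.\ exactly $Np = mq$ sub-cells, a full period of the domain; by periodicity $\mathcal{A}^N = \mathrm{id}_{\mathcal{H}}$, so $\Psi^h(w_0) = \mathcal{N}^N(w_0) = w_0$, proving that $w_0$ is a fixed point of $\Psi^h$.

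I expect the only genuinely delicate step to be the careful justification of the ``$\mathcal{I}\mathcal{E}$ is the identity on cells with at most one kink'' fact in the presence of kinks that, after a shift, land exactly on a grid point: there the left-sided derivative recorded by $\mathcal{E}$ need not equal the right-hand slope of that cell, and one must check that the two candidate lines $L_\text{L}$ and $L_\text{R}$ of the interpolant then intersect either precisely at the cell's single interior kink or at/outside the cell endpoint --- in the latter case triggering the fallback linear interpolant --- so that the cell's piece of the function is reproduced verbatim in every sub-case, and that the ``boundary kinks to the left'' convention keeps the per-cell kink count (and hence the action of $\mathcal{I}\mathcal{E}$) unambiguous under the shift. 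Once that bookkeeping is pinned down, non-defectiveness is manifestly shift-invariant and the induction closes.
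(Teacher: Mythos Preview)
Your proposal is correct and follows essentially the same approach as the paper's proof: both argue that a shift by $\mu h = p\cdot\tfrac{h}{q}$ is an integer number of sub-cells, hence preserves the ``$\ge q-1$ sub-cells between adjacent kinks'' condition, which in turn guarantees that no grid cell ever contains two kinks and so $\mathcal{I}\mathcal{E}$ acts as the identity at every step. You spell out the induction and the boundary-convention bookkeeping more explicitly than the paper's terse three-sentence version, but the underlying mechanism is identical.
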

\begin{proof}
As $\mathcal{I}(\boldsymbol{U}^{h})$ is non-defective, it has at least $q-1$ sub-cells in between any pair of adjacent kinks. Hence, it is impossible to ever generate a situation where two kinks fall within the same grid cell, if the function $\mathcal{I}(\boldsymbol{U}^{h})$ is only shifted by integer multiples of the sub-cell width $h/q$. And in fact, that is the case, because by construction each update step $\mathcal{N}$ shifts $\mathcal{I}(\boldsymbol{U}^{h})$ by $\mu h = p \frac{h}{q}$, i.e., by exactly $p$ sub-cells.
\end{proof}

Based on this intuitive geometric characterization of an important class of fixed points of the jet scheme, we can easily analyze the two initial condition approximation strategies described in \cref{sec:sampling_IC}. While the $(\phi,\psi)$ approach generally generates defective interpolants (e.g., two adjacent grid cells may have kinks very close to their shared grid point), the $(\phi^{\delta},\psi^{\delta})$ method, defined in \eqref{modified_IC}, always generates non-defective interpolants, because adjacent kinks are precisely a distance $h$ apart from each other.

\begin{figure}[htb]
\centering
\begin{tikzpicture}[scale=0.75]
\draw [thick](-6,0) -- (6,0);
\filldraw [black] (-6,0) circle (2pt); 
\draw [thick,dashed](-6,-0.2) -- (-6,0.5);
\filldraw [black] (-2,0) circle (2pt); 
\draw [thick,dashed](-2,-0.2) -- (-2,0.5);
\filldraw [black] (2,0) circle (2pt); 
\draw [thick,dashed](2,-0.2) -- (2,0.5);
\filldraw [black] (6,0) circle (2pt); 
\draw [thick,dashed](6,-0.2) -- (6,0.5);
\node[below =2pt of {(-6,0)}] {$x_0=0$};
\node[below =2pt of {(-2,0)}] {$x_1$};
\node[below =2pt of {(2,0)}] {$x_2$};
\node[below =2pt of {(6,0)}] {$x_3=1$};
\node[above =2pt of {(-5,0)}] {$x_\mathrm{ft}$};
\filldraw [blue] (-5,0) circle (2pt);
\filldraw [red] (-4,0) circle (2pt);
\filldraw [red] (-3,0) circle (2pt);
\filldraw [blue] (-1,0) circle (2pt);
\filldraw [red] (0,0) circle (2pt);
\filldraw [red] (1,0) circle (2pt);
\filldraw [blue] (3,0) circle (2pt);
\filldraw [red] (4,0) circle (2pt);
\filldraw [red] (5,0) circle (2pt);
\filldraw [black!60!green] (-2.4,0) circle (2pt);
\node[above =2pt of {(-2.4,0)}] {$x_\text{k}$};
\filldraw [black!60!green] (1.6,0) circle (2pt);
\draw [thick,dashed](1.6,-0.2) -- (1.6,0.5);
\draw [thick](1.6,0.5) -- (2,0.5);
\filldraw [black!60!green] (5.6,0) circle (2pt);
\draw[thick, ->] (-2.55,0.6) arc (0:180:0.4);
\node[above =2pt of {(1.8,0.4)}] {$\delta$};
\end{tikzpicture}
\vspace{-.5em}
\caption{Domain with three cells. Black dots denote the grid points; blue dots denote the foot points; red dots are the sub-cell boundaries; green denotes the kink positions that are $\delta \ll h$ distance away from the grid points. The black arrow shows the next relative position of kink.}
\label{fig:domain_sub-cells}
\end{figure}
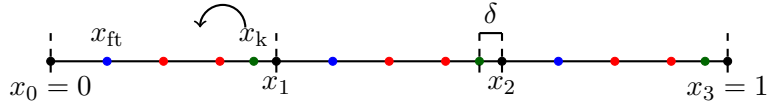

As an illustration, consider $m = 3$ grid cells with CFL number $\mu = \frac{3}{4}$, i.e., traveling the distance $1$ requires $N = 4$ steps. The $(\phi^{\delta},\psi^{\delta})$ strategy produces a piecewise linear function whose kinks are all a distance $\delta$ left of a grid point; hence they all fall into the rightmost sub-cell of each grid cell. In turn, the foot points fall on the boundary between the first and second sub-cell, see \cref{fig:domain_sub-cells}. Hence, $N = 4$ steps of the jet scheme amount to the sequence $[\text{L} \ \text{L} \ \text{L} \ \text{R}]$, where (L)/(R) means that in each cell, the left/right-sided line of the jet scheme interpolant is used.

We are now in a position to prove the main theorem, establishing the exactness of the jet scheme with the specific approximation of the initial condition \cref{modified_IC}.
\begin{theorem}[Exactness]
\label{exactness_thm}
\Cref{numerical_method} with the strategy \cref{modified_IC} of approximating the periodic initial function $u_{0}(x)\in C^{0}([0,1])$, applied to problem \cref{eq:advection_1d}, has the commuting limits property \cref{commuting_limits}.
\end{theorem}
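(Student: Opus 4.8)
The plan is to reduce \eqref{commuting_limits} to the single identity $\lim_{h\to0}\limsup_{n\to\infty}\boldsymbol{\varepsilon}^{h,t_n}=0$ (as indicated at the end of \cref{sec:convergence_and_the_problem}), and to prove this by showing that, with the initialization \eqref{modified_IC}, the jet-scheme evolution is nothing but a rigid translation at \emph{every} step, so that no error is ever created beyond the initial approximation error of $u_0$. Concretely, I would fix $\mu=p/q$ with $\gcd(p,q)=1$, restrict the mesh sequence to $h=1/m$ with $p\mid m$ so that $N=1/(\mu h)=mq/p\in\mathbb{N}$, and choose the offset $\delta\in(0,h/q)$ (compatible with $\epsilon\ll\delta\ll h$, and in particular not an integer multiple of $h/q$). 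Measuring errors in $\|\cdot\|_{L^1}$ as in \cref{subsec:numerical_analyis_jet_scheme}, the main work is then a geometric statement about the one-step operator $\mathcal{N}=\mathcal{I}\mathcal{E}\mathcal{A}$ of \cref{subsec:two_jet_interpretations}; everything else follows from translation invariance of the periodic $L^1$ norm and standard piecewise-linear approximation.

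The key step is the rigidity claim. A direct check from \eqref{modified_IC} shows that $\mathcal{I}(\boldsymbol{U}^{h,0})$ coincides with $I(\cdot+\delta)$, the $\delta$-left-shift of the nodal piecewise-linear interpolant $I$ of $\{(x_j,u_0(x_j))\}$; hence it carries exactly one kink per grid cell, sitting a distance $\delta$ to the left of the right grid point, strictly interior to the rightmost sub-cell, so it is non-defective. Each application of $\mathcal{N}$ first shifts this function by $\mu h=p\,(h/q)$, i.e.\ by $p$ sub-cells. Since $\mu<1$ the $m$ kinks stay exactly $h$ apart, remain one per cell, and — because $\delta$ avoids the sub-cell lattice — never land on a sub-cell boundary, so they stay strictly interior to cells and off the grid points. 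By the mechanism in the proof of the non-defective-implies-fixed-point lemma, $\mathcal{I}\mathcal{E}$ only alters cells carrying two kinks; here there are none, and on a cell with a single interior kink the two nodal line data are exactly the two segments of the profile, their intersection $x_\text{k}$ is the kink itself (inside the cell), so the genuine branch of the nonlinear interpolant returns the same function. Thus $\mathcal{N}\,\mathcal{I}(\boldsymbol{U}^{h,0})=\mathcal{A}\,\mathcal{I}(\boldsymbol{U}^{h,0})$, and inductively $\mathcal{I}(\boldsymbol{U}^{h,t_n})=\mathcal{A}^n\mathcal{I}(\boldsymbol{U}^{h,0})$ for all $n$: the numerical profile is $\mathcal{I}(\boldsymbol{U}^{h,0})$ translated by $at_n$ modulo $1$. (In particular this re-derives the fixed-point statement, since $\mathcal{A}^N=\mathrm{id}$ on $[0,1]$.)

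With the rigidity in hand the limit interchange is immediate. The true solution is $\boldsymbol{u}^{t_n}=\mathcal{A}^n u_0$, and since $\|\cdot\|_{L^1}$ on the periodic domain is translation invariant, $\boldsymbol{\varepsilon}^{h,t_n}=\|\mathcal{A}^n\mathcal{I}(\boldsymbol{U}^{h,0})-\mathcal{A}^n u_0\|_{L^1}=\|\mathcal{I}(\boldsymbol{U}^{h,0})-u_0\|_{L^1}$ for every $n$. Hence $\limsup_{n\to\infty}\boldsymbol{\varepsilon}^{h,t_n}=\|\mathcal{I}(\boldsymbol{U}^{h,0})-u_0\|_{L^1}=\|I(\cdot+\delta)-u_0\|_{L^1}$, and by uniform continuity of $u_0$ on $[0,1]$ this is bounded by $\|I(\cdot+\delta)-u_0\|_\infty\le\|I-u_0\|_\infty+\|I(\cdot+\delta)-I\|_\infty\to0$ as $h\to0$ (recall $\delta<h\to0$). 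Therefore $\lim_{h\to0}\limsup_{n\to\infty}\boldsymbol{\varepsilon}^{h,t_n}=0$; and since the quantity $\boldsymbol{\varepsilon}^{h,t_n}$ is the same $h$-dependent number for every $n$, one also has $\lim_{h\to0}\boldsymbol{\varepsilon}^{h,t_n}=0$ and hence $\limsup_{n\to\infty}\lim_{h\to0}\boldsymbol{\varepsilon}^{h,t_n}=0$, so \eqref{commuting_limits} holds.

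The step I expect to be the real obstacle is making the "rigid translation at every step" claim airtight, i.e.\ verifying that $\mathcal{I}\mathcal{E}$ introduces no modification of the profile at any step (not merely that it returns after $N$ steps). This needs two ingredients: (a) non-defectiveness of \eqref{modified_IC} is preserved under the sub-cell-integer translation performed by $\mathcal{A}$ — handled above by counting sub-cells and using $\mu<1$; and (b) the nonlinear interpolant, together with the left-sided-derivative convention used by $\mathcal{E}$ at kinks, reconstructs a one-kink-per-cell piecewise-linear profile exactly — here one must check that \emph{boundary} kinks never occur, which is precisely why $\delta$ is chosen off the sub-cell lattice, and must also dispose of the fallback/collinear configurations via the non-collinearity assumption of \cref{sec:sampling_IC}. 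Both ingredients are essentially set up in \cref{subsec:two_jet_interpretations}–\cref{subsec:numerical_analyis_jet_scheme}, so the remaining task is careful bookkeeping rather than a new idea.
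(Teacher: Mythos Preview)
Your proposal is correct and follows essentially the same route as the paper: identify $\mathcal{I}(\boldsymbol{U}^{h,0})$ with the shifted nodal interpolant $I(\cdot+\delta)$, use non-defectiveness to conclude that each step of $\mathcal{N}$ acts as pure translation (the paper phrases this via the fixed-point lemma and then writes $\mathcal{I}(\boldsymbol{U}^{h,t_n})(x)=\mathcal{I}(\boldsymbol{U}^{h,0})(\mathrm{mod}(x-n\mu h,1))$, which is exactly your rigidity claim), and finally bound the time-independent error $\|I(\cdot+\delta)-u_0\|_{L^1}$ by quantities that vanish as $h\to0$. The only cosmetic differences are that the paper controls the last term via the modulus of continuity $\boldsymbol{\omega}(u_0,h)+\boldsymbol{\omega}(u_0,\delta)$ rather than your $L^\infty$ triangle inequality, and handles kinks on sub-cell boundaries by a left-association convention rather than your explicit restriction $\delta\in(0,h/q)$; neither of these constitutes a different argument.
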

\begin{proof}
The initial vector $\boldsymbol{U}^{h,0} = [\phi_1^{0},\psi_1^{0},\phi_2^{0},\psi_2^{0}, \dots ,\phi_m^{0},\psi_m^{0}]^{T}$ in \cref{modified_IC} induces a piecewise linear interpolant $\mathcal{I}(\boldsymbol{U}^{h,0})$ which is exactly equal to $I(x+\delta)$ in \cref{modified_IC}, where $I(x)$ is the standard piecewise linear function that connects the grid point values. We have already shown that $\mathcal{I}(\boldsymbol{U}^{h,0})$ is a fixed point of the jet scheme. So for a given $h$, the numerical solution at any time step $t_n$ satisfies $\mathcal{I}(\boldsymbol{U}^{h,t_n})(x) = \mathcal{I}(\boldsymbol{U}^{h,0})(\textrm{mod}(x-n \mu h,1))$. As established in \cref{sec:error convergence}, we consider the jet scheme's total error as the $L^1$-distance of the approximate function from the true solution:
\begin{align*}
\boldsymbol{\varepsilon}^{h,t_n}
&= \|\mathcal{I}(\boldsymbol{U}^{h,t_n})-\boldsymbol{u}^{t_n}\|_{{L}^1}\\
&= \|\mathcal{I}(\boldsymbol{U}^{h,0})(\textrm{mod}(x-n \mu h,1))-u_0(\textrm{mod}(x-n \mu h,1))\|_{{L}^1}\\
&= \|\mathcal{I}(\boldsymbol{U}^{h,0})(x)-u_0(x)\|_{{L}^1} = \|I(x+\delta)-u_0(x)\|_{{L}^1}\\
& \leq \|I(x+\delta)-u_0(x+\delta)\|_{{L}^1} +
\|u_0(x+\delta)-u_0(x)\|_{{L}^1} \\
& \leq \boldsymbol{\omega}(u_0,h)+\boldsymbol{\omega}(u_0,\delta)\;,
\end{align*}
where $\boldsymbol{\omega}(u_0,r) = \text{sup}\{|u_0(x)-u_0(y)|:x,y\in[0,1], \ |x-y|<r\}$ is the modulus of continuity of $u_0 \in C^{0}([0,1])$. The last inequality arises as follows. On each sub-interval $[x_j,x_{j+1}]$, one has $\int_{x_j}^{x_{j+1}}|I(x)-u_0(x)|\,\textrm{d} x \leq h \boldsymbol{\omega}(u_0,h)$, and thus $\|I(x+\delta)-u_0(x+\delta)\|_{{L}^1} \leq \boldsymbol{\omega}(u_0,h)$; and by similar arguments $\|u_0(x+\delta)-u_0(x)\|_{{L}^1} \leq \boldsymbol{\omega}(u_0,\delta) $. Consequently, with $\delta \ll h$ (assuming exact arithmetic here), we obtain that
\begin{equation*}
\lim_{h\to 0}\limsup_{n\to\infty}\boldsymbol{\varepsilon}^{h,t_n}
\leq \lim_{h\to 0} [\boldsymbol{\omega}(u_0,h)+\boldsymbol{\omega}(u_0,\delta)] \leq \lim_{h\to 0} [\boldsymbol{\omega}(u_0,h)+\boldsymbol{\omega}(u_0,h)] = 0\;,
\end{equation*}
where $\boldsymbol{\omega}(u_0,\delta) \leq \boldsymbol{\omega}(u_0,h)$ follows from the $r$-monotonicity of $\boldsymbol{\omega}(u_0,r)$, and $u_0$ uniformly continuous on $[0,1]$ implies $\lim_{h \to 0}\boldsymbol{\omega}(u_0,h) = 0$.
Moreover, because the scheme is convergent (in the traditional sense), we also have $\limsup_{n\to\infty} \lim_{h\to 0} \boldsymbol{\varepsilon}^{h,t_n} = 0$. Hence, we have established the commuting limits property
\begin{align*}
\lim_{h\to 0}\limsup_{n\to\infty} \boldsymbol{\varepsilon}^{h,t_n}
= 0 =\limsup_{n\to\infty}\lim_{h\to 0} \boldsymbol{\varepsilon}^{h,t_n}\;.
\end{align*}
\end{proof}

\begin{remark}
Obviously, in floating point arithmetic, there are practical limitations to the just established convergence notions. As in \cref{sec:sampling_IC}, one should choose $\epsilon\ll\delta \ll h$ (where $\epsilon$ is the machine precision); and the results in \cref{fig:error_convergence} indicate that a linear accumulation of round-off errors could occur. Both aspects imply technical limitations on the $h\to 0$ as well as $t_\mathrm{f}\to\infty$ limits; however, those are not much different from what is incurred in other numerical methods. Moreover, the results in \cref{fig:error_convergence} indicate that for practically relevant choices of $h$ and $t_\mathrm{f}$, there are no detriments to the exactness of the jet schemes.
\end{remark}

\subsection{Other initial condition approximation: good news and some caveats}
\label{subsec:other_ic_approximation}
With \cref{exactness_thm} we have established the exactness of jet schemes, if the $(\phi^{\delta},\psi^{\delta})$ initial function approximation \eqref{modified_IC} is employed. The underlying reason is that the initial function $\mathcal{I}(\boldsymbol{U}^{h,0})$ is a fixed point of the jet scheme update step. However, other approximations of the initial conditions may yield smaller error constants. For instance, \cref{fig:error_convergence} shows that the $(\phi,\psi)$ strategy, which is natural if $u_0\in C^1$ as it is based on direct evaluations of the initial condition and its derivatives, turns out to generate second order convergent approximations with smaller errors than the $(\phi^{\delta},\psi^{\delta})$ strategy. A key drawback is that $(\phi,\psi)$ strategy generally yields an initial vector $\boldsymbol{U}^{h,0} \in \mathbb{R}^{2m}$, or function $\mathcal{I}(\boldsymbol{U}^{h,0})$, that is \emph{not} a fixed point of the jet scheme. However, numerical evidence (\cref{fig:error_convergence} and below) suggests that, for a wide range of sufficiently smooth initial functions, the jet scheme does always assume a fixed point after a finite number of update steps, and thus nevertheless gives rise to the commuting limits property \cref{commuting_limits}.

To demonstrate this property, and its consequences, more distinctly, we present a numerical test of the jet scheme with the $(\phi,\psi)$ strategy up to a really large final time $t_\textrm{f} = 100000$. \cref{fig:jet_scheme_constant_coefficients} shows the numerical results on $m = 100$ grid points with CFL number $\mu = 0.9$ at $t=100$, $t=1000$, and the final time. One can see that the jet scheme solution persists up to infinite time. Moreover, we observe (not shown here) that if the computational grid is refined, one always obtains exact solutions after finitely many steps, and they converge (as $h\to 0$) to the true solution. In other words, this scheme is exact for this problem.

Moreover, we compare the long-time evolution of jet schemes with other nonlinear (we already know from \cref{sec:linear_methods} that linear methods have no chance of converging at $t\to\infty$) classical high resolution methods: fifth-order WENO \cite{jiang1996efficient}, Lax-Wendroff with van Leer limiter \cite{LeVeque2002}, and Lax Wendroff with Superbee limiter \cite{LeVeque2002}. In \cref{fig:jet_scheme_constant_coefficients}, the three snapshots in time are complemented (bottom right panel) by the temporal evolution of the maximum of the numerical solutions. Clearly, all classical schemes keep on deforming in time, with the WENO and van Leer solutions being essentially constant at $t_\textrm{f}$, and the Superbee solution flattening on a very slow time scale. These results demonstrate the jet scheme's superiority over the classical methods in terms of its long-time numerical behavior for constant-coefficient advection problems.

\begin{figure}[ht]
\begin{minipage}[b]{.49\textwidth}
\includegraphics[width=\textwidth]{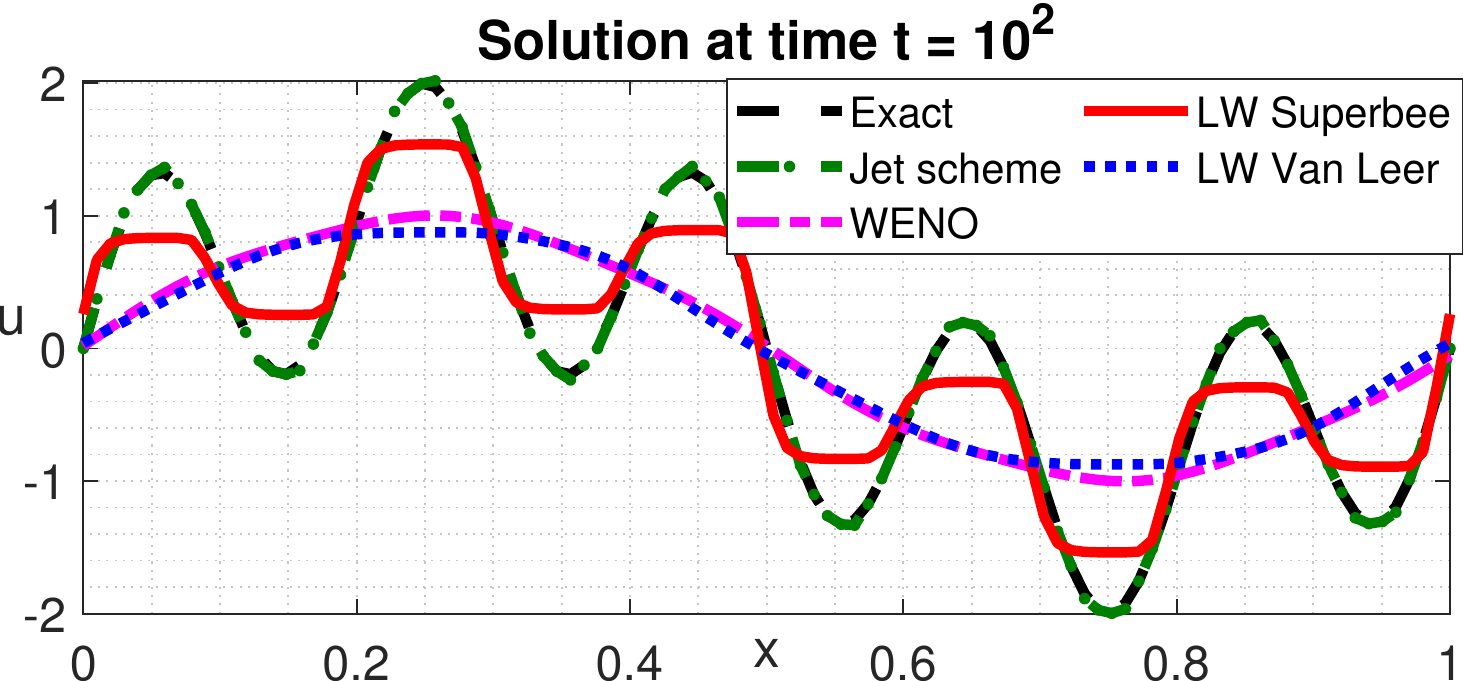}
\end{minipage}
\hfill
\begin{minipage}[b]{.49\textwidth}
\includegraphics[width=\textwidth]{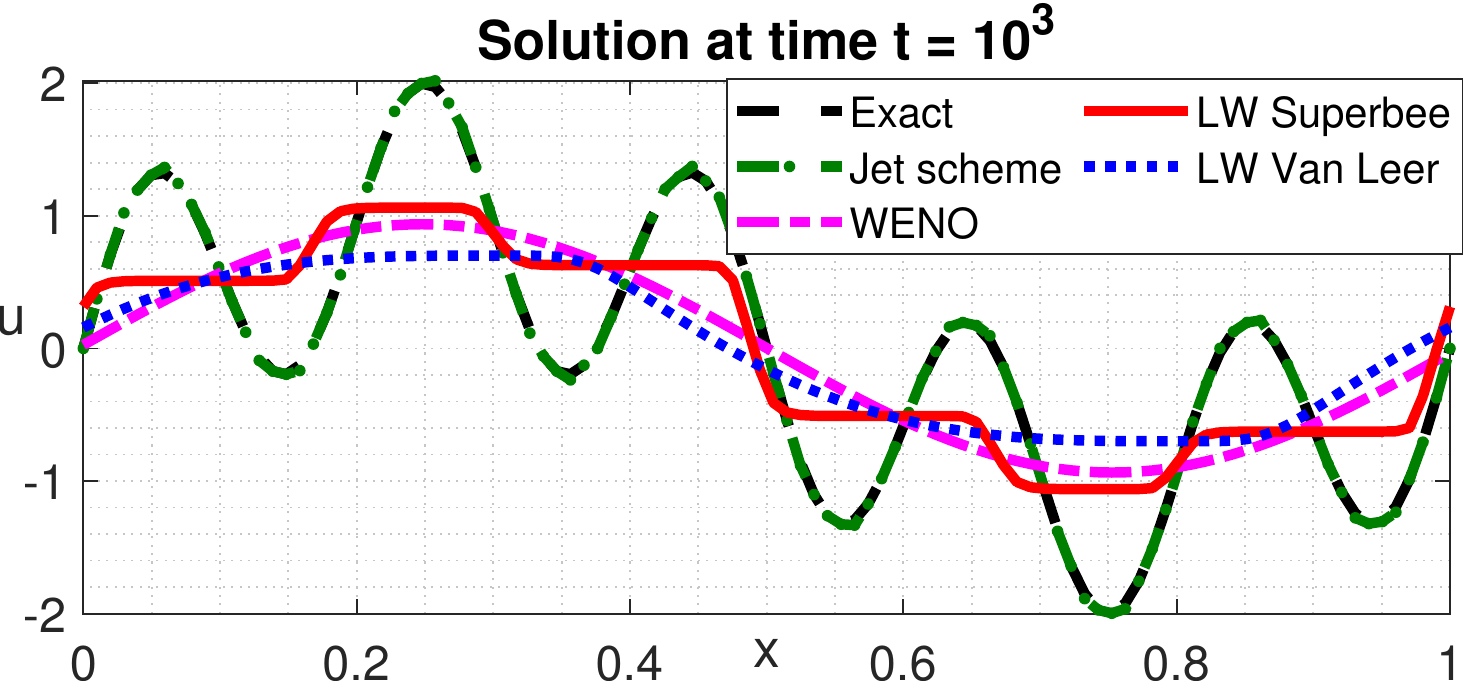}
\end{minipage}

\begin{minipage}[b]{.49\textwidth}
\includegraphics[width=\textwidth]{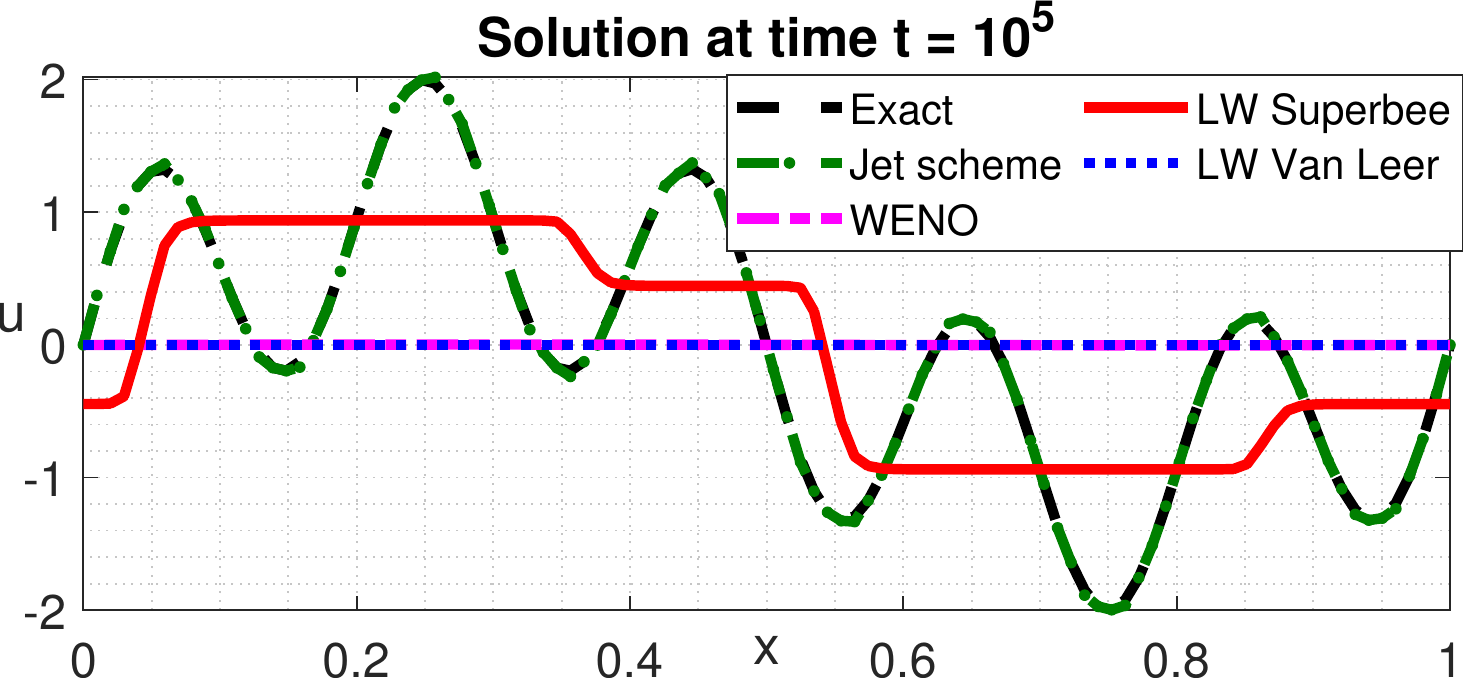}
\end{minipage}
\hfill
\begin{minipage}[b]{.49\textwidth}
\includegraphics[width=\textwidth]{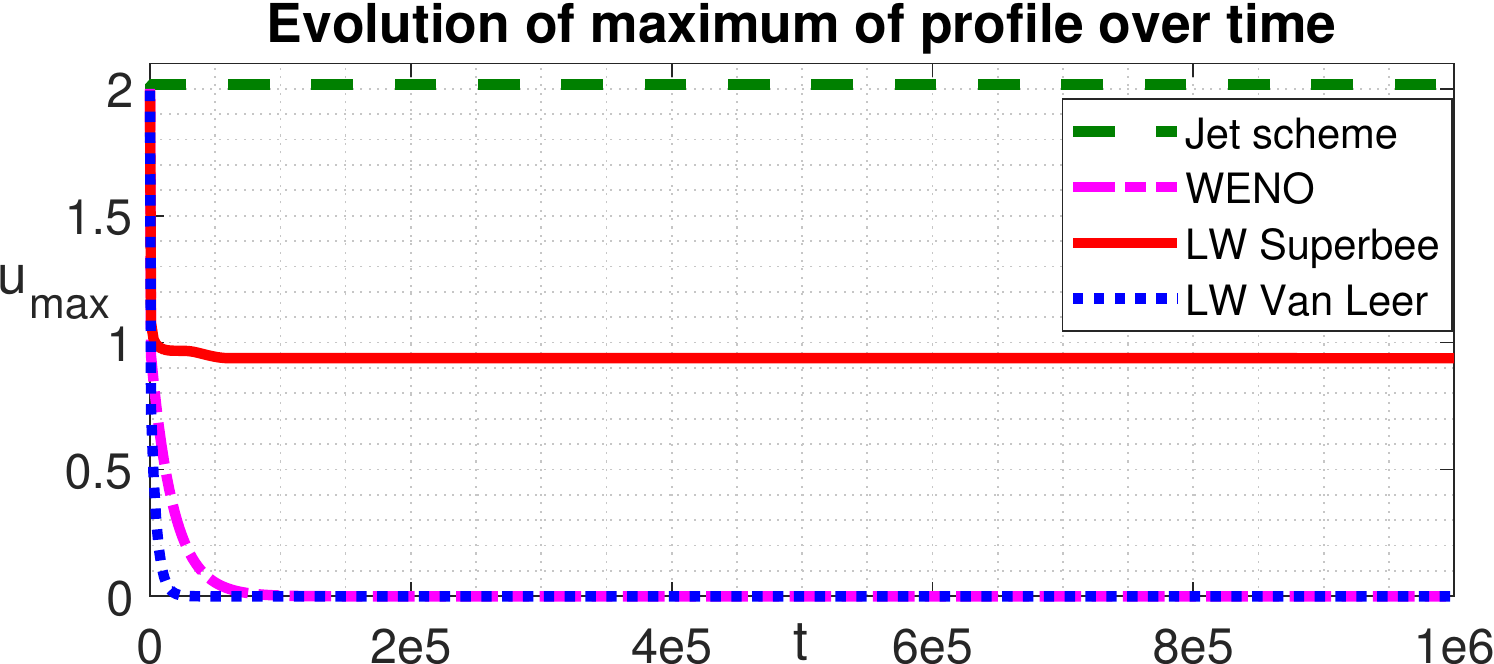}
\end{minipage}
\caption{Example of long-time evolution of nonlinear jet scheme with $(\phi,\psi)$ initial condition approximation, for $u_t+u_x=0$ with $\mu=0.9$ on $m=100$ grid points, in comparison with three classical nonlinear methods ('LW' stands for \textrm{Lax-Wendroff}). While the classical methods yield deformed long-time solutions, the jet scheme assumes (after finitely many steps) a fix point (i.e., an exact solution) that is a good approximation of the true solution. The bottom right panel shows the temporal evolution of the maximum, and it confirms the jet scheme's superiority over the existing methods in capturing long-time evolution.}
\label{fig:jet_scheme_constant_coefficients}
\end{figure}



While these observations are very promising, it is actually not true that any possible initial vector $\boldsymbol{U}^{h,0}$ will be transformed (by the jet scheme) into a fixed point. Here we establish a procedure to construct examples where the jet scheme does not assume a fixed point after a finite number of time steps. To find those, we seek an eigenvector of the mapping ${\Psi}^h$ with associated eigenvalue of modulus strictly less than $1$. We do so by restricting the map ${\Psi}^h$ to be of the form $M^N$, where $M$ is a specific one-step update rule that is linear. Because in this case the jet scheme applies the same update rule in every time step, it suffices to analyze just one step of the scheme.

Specifically, consider $m = 6$ and $\mu = \frac{3}{4}$, so that after $N = 8$ steps the solution has traveled a distance $1$. Now assume that the updates on the 6 cells are (from left to right, where 'L' denotes the case that the foot point is left to the kink and 'F' denotes the fallback case): [L \ L \ F \ L \ L \ F]. It is straightforward to check that the resulting matrix $M$ has a real eigenvalue with modulus less than $1$. Hence, if we start with the associated eigenvector as initial condition, the numerical solution will exponentially decay to zero, and it will never turn into a fixed point in finite time, as shown in \cref{counter_example}.

\begin{figure}[ht]
\begin{minipage}[b]{.49\textwidth}
\includegraphics[width=\textwidth]{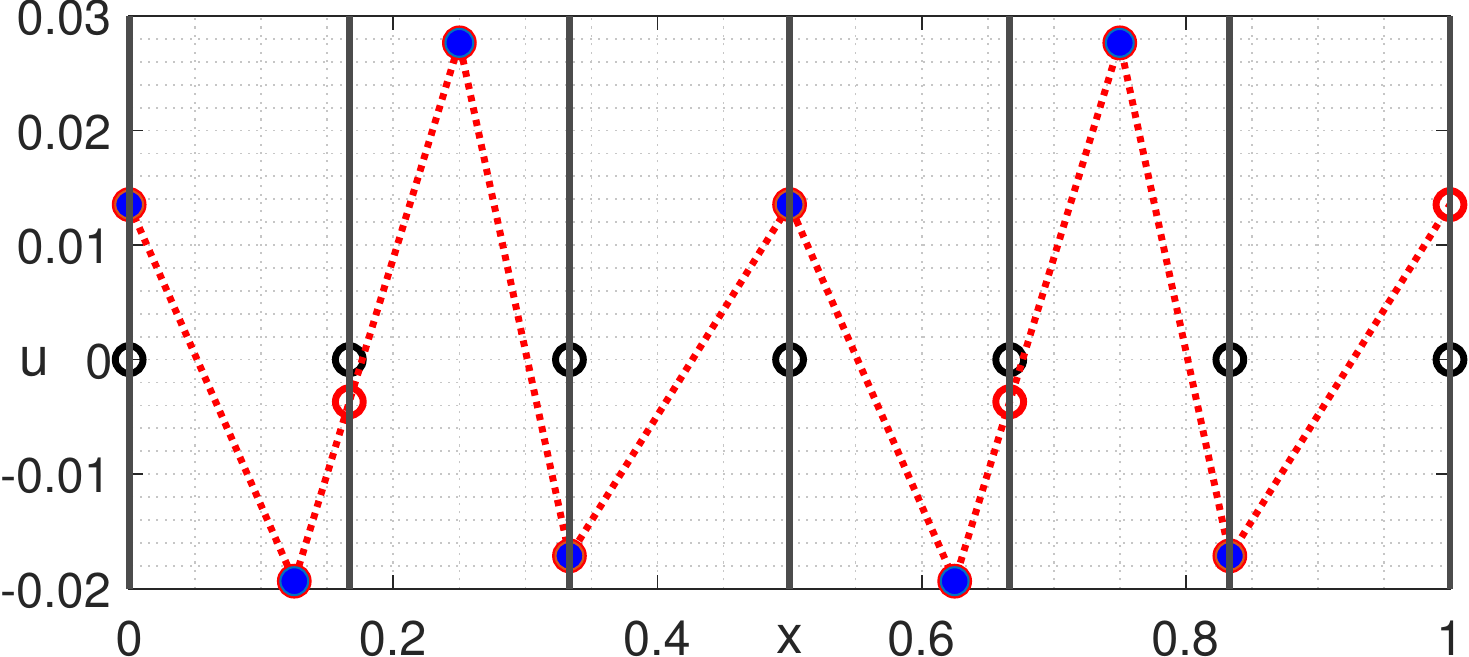}
\end{minipage}
\hfill
\begin{minipage}[b]{.49\textwidth}
\includegraphics[width=\textwidth]{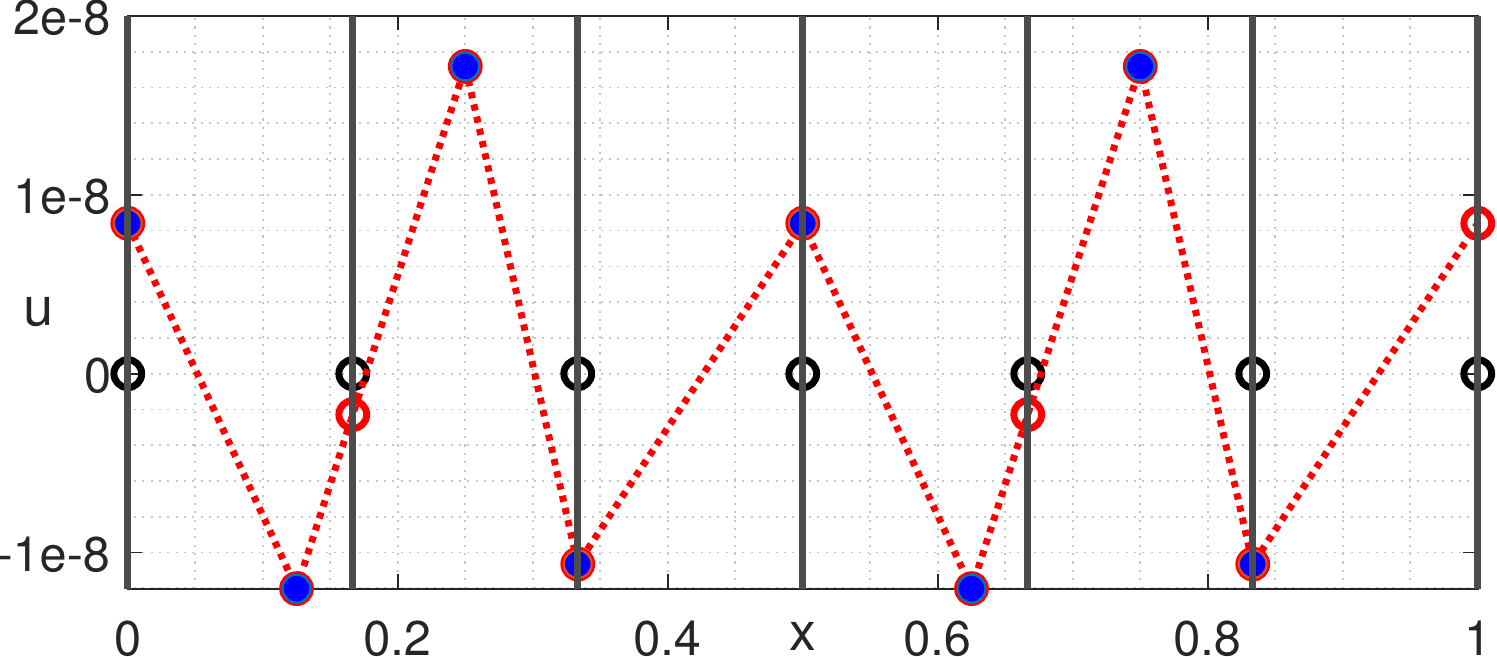}
\end{minipage}
\vspace{-.5em}
\caption{Eigenvector of the one-step matrix $M$ corresponding to an eigenvalue with modulus less than $1$.
Left: initial condition. Right: solution at $t_\mathrm{f}=5$, which is exactly same as the initial profile, but multiplied by $10^{-6}$.}
\label{counter_example}
\end{figure}


Based on this example, we can also construct an infinite family of corresponding examples for larger grid sizes $m$ (multiples of 6), by simply concatenating the sequence [L \ L \ F \ L \ L \ F]. However, such functions are by construction highly oscillatory (on the scale $h$). In contrast, $h\to 0$ sequences of approximations to a smooth initial function are the opposite of such grid-scale oscillatory functions; and in fact, decaying eigenvectors like the ones constructed here never arose (for $h$ sufficiently small) in any test cases we conducted. An intuitive explanation for this fact is as follows. As $h$ becomes small, any smooth function locally is $\mathcal{O}(h^3)$ close to a parabola on a grid cell of size $h$. If one had an exact parabola (with nonzero quadratic part) on that cell, then the associated piecewise linear jet scheme interpolant would have its kink precisely in the center of the interval. Consequently, for the true function, kinks will be $\mathcal{O}(h^3)$ away from the center of each cell, except for those few cells in which the function has inflection points. As a consequence, for $h$ sufficiently small, the approximation of a smooth initial condition will be away from those oscillatory eigenvectors. That being said, even for tiny $h$, one generally cannot expect to start with a fixed point of the jet scheme because near inflection points, fallback cases may arise. However, as discussed above, we do observe that for $h$ sufficiently small, a fixed point is generally assumed after $\mathcal{O}(1)$ steps (independent of $h$).

\vspace{1.5em}
\section{Conclusions and outlook}
\label{sec:conclusions}
The key takeaways from this study are that: (a)~there is merit in conceptualizing infinite time limits of numerical methods for advection problems; and (b)~schemes that possess convergent infinite time limits do exist. Conceptually, the focus on an infinite (or asymptotically large) time horizon is a departure from classical numerical analysis paradigms, and this work establishes new notions of numerical analysis and formalizes the infinite time convergence via the commuting limits property \eqref{commuting_limits} and the concept of exact methods that possess convergent sequences of fixed point solutions. Moreover, in establishing these new concepts, traditional numerical analysis perspectives have been extended, such as considering error convergence in both mesh size $h$ and final time $t_\mathrm{f}$, and the focus on long-time behavior of existing numerical methods.

The specific results and insights found in this work are as follows.
First, it is formally shown that linear methods for advection problems cannot (except for trivial cases) yield exact solutions, and thus they are unable to produce approximations that do not deteriorate in time.
Second, the formal proof is complemented by asymptotic arguments, based on the modified equation, that establish the scaling of the truncation error $\boldsymbol{\varepsilon}^{h,t_\mathrm{f}}$ interpreted as a bivariate function of $h$ and $t_\mathrm{f}$.
Third, the jet scheme methodology with a specific nonlinear interpolant is introduced, including a duality between being an update rule in a finite-dimensional state space vs.\ in a function space.
Fourth, exploiting that duality, we prove that the jet scheme, with a specific initial condition approximation, does indeed satisfy the stringent requirement of producing convergent infinite time solutions.
And finally, the benefit of the jet schemes' infinite time convergence, relative to existing widely used nonlinear methods, is illustrated in a comparative long-term computation example.

There is a broad variety of questions that are triggered by the concepts and results presented here. One important (open) question is whether there are traditional fixed-grid advection schemes that are exact. While we have proven that exactness is impossible to achieve with linear methods, and have demonstrated numerically that various popular nonlinear methods all fail to be exact, this does not mean that there are no such methods. Certainly, the results in \cref{fig:jet_scheme_constant_coefficients} demonstrate that different finite volume limiters may result in vastly different long-time behavior of the numerical solutions. Specifically, while Lax-Wendroff with van Leer limiter clearly produces an essentially constant solution after some large but finite time ($t=10^5$), Lax-Wendroff with Superbee limiter preserves a (deformed but) clearly non-constant profile even after such a long time. This property plausibly has to do with the fact that the Superbee limiter follows the uppermost boundary of the TVD region \cite{LeVeque2002}, and it might hint at the existence of limiters that generate exact methods.

Another important question is to what extent the concepts established herein generalize to more complicated transport problems. Clearly, it is a long route from problem \eqref{eq:advection_1d} to real application problems that may involve (a)~variable coefficients, (b)~higher dimensions, and/or (c)~coupling of transport to other equations. Regarding (a): while the jet scheme methodology can be applied to variable coefficient advection, the exactness properties fail to carry over. At the same time, simple numerical tests indicate that for non-rapidly varying coefficients, it does take a long time for the numerical solution to degrade. Regarding (b): while linear Hermite interpolants generalize naturally to higher space dimensions \cite{SeiboldRosalesNave2012}, there is no such canonical generalization of the piecewise linear interpolant.

Finally, regarding (c), an important question for future research is to what extent exact numerical methods can give rise to more accurate long-time solutions for problems where the accurate resolution of linear advection is a critical sub-problem, such as level set methods in fluid flow simulations or contact discontinuities in compressible gas dynamics. Likewise, one may ask whether some of the presented ideas and concepts can also be extended to other problems with nonlinear traveling waves (solitons, diffusion-reaction, advection-reaction).

\vspace{1.5em}
\section*{Acknowledgments}
This work was supported by the National Science Foundation via grants DMS--1719640 and DMS--2012271.

\vspace{1.5em}
\bibliographystyle{plain}
\bibliography{references}

\vspace{2.5em}
\end{document}